\DeclarePairedDelimiter\ceil{\lceil}{\rceil}
\DeclarePairedDelimiter\floor{\lfloor}{\rfloor}
\newcommand{\ex}{{\rm ex}} 
\newcommand{\bo}{{\rm bot}} 
\newcommand{\Bo}{{\rm Bot}}
\newcommand{\p}{{\rm p}}
\newcommand{\bbal}{{\rm bbal}} 
\newcommand{\Bbal}{{\rm Bbal}} 
\newtheorem{theorem}{Theorem}
\newtheorem{definition}[theorem]{Definition}
\newtheorem{prop}[theorem]{Proposition}
\newtheorem{corollary}[theorem]{Corollary}
\newtheorem{claim}{Claim}
\newcommand{\claimproof}[1]{\noindent\emph{Proof of Claim~\ref{#1}.} }
\newcommand{\claimqed}{\hfill{$\rhd$}}
\title{Unavoidable patterns in $2$-colorings of the complete bipartite graph}
\author{
	Adriana Hansberg $^\ddagger$ 
        Denae Ventura \thanks{Corresponding author} $^{,\S}$ 
	\\ \\ \\
	$^\ddagger$ Instituto de Matem\'aticas, UNAM Juriquilla, Querétaro 76230, Mexico.
	\\
	$^\S$ University of California at Davis, One Shields Avenue, Davis, CA 95616, USA}
\providecommand{\keywords}[1]
{
  \small	
  \textbf{\textit{Keywords---}} #1
}
\begin{document}

\maketitle
\begin{abstract}
We determine the colored patterns that appear in any $2$-edge coloring of $K_{n,n}$, with $n$ large enough and with sufficient edges in each color. 
We prove the existence of a positive integer $z_2$ such that any $2$-edge coloring of $K_{n,n}$ with at least $z_2$ edges in each color contains at least one of these patterns. We give a general upper bound for $z_2$ and prove its tightness for some cases. 
We define the concepts of bipartite $r$-tonality and bipartite omnitonality using the complete bipartite graph as a base graph. 
We provide a characterization for bipartite $r$-tonal graphs and prove that every tree is bipartite omnitonal. Finally, we define the bipartite balancing number and provide the exact bipartite balancing number for paths and stars.
\end{abstract}

\keywords{Ramsey, Zarankiewicz, unavoidable patterns, balanceable graph, omnitonal graph}








\section{Introduction}


Ramsey's theorem \cite{R30} states that, for any integer $t \ge 3$, there exists an integer $R(t)$ such that if $n \ge R(t)$, then any $2$-coloring of the edges of a complete graph of order $n$ contains a monochromatic copy of a complete graph on $t$ vertices. On the other side, Turán's theorem says that, every $n$-order graph having more than $\left( 1- \frac{1}{t}\right) \frac{n^2}{2}$ edges contains a copy of a complete graph on $t$ vertices. There are bipartite versions of these theorems, too. Indeed, for any integer $t \ge 3$, there is the bipartite Ramsey number $BR(t)$, which is the minimum number $n$ for which any $2$-coloring of the complete bipartite graph $K_{n,n}$ with $n$ vertices on each partition set produces a monochromatic copy of a $K_{t,t}$ \cite{BeSch}. Also, the bipartite version of Turán's theorem, known as the Zarakievicz problem, is given by the K\H{o}vári-Sós-Turán theorem that shows that any bipartite graph with $n$ vertices on each side and at least $(t-1)^{\frac{1}{t}}n^{2-\frac{1}{t}} + \frac{1}{2}(t-1)n$ edges contains a copy of $K_{t,t}$ \cite{KST54}. There are multiple generalizations of these theorems, with graphs different from the complete ones, asymmetric versions, with more colors, in hypergraphs, etc. See~\cite{soifer_book} for a general panorama on Ramsey Theory as well as~\cite{bol} for questions concerning extremal settings as in Turán's theorem. More recent results in these areas can be found in \cite{char, conlon, foxden, FoSu08, gra, ramtur}.

In a sense, all the theorems mentioned above talk about similar things: patterns that are unavoidable under certain circumstances. In Ramsey's approach, we have different colors and look for monochromatic substructures. In Turán's approach, one aims to find the minimum density conditions that let certain patterns emerge. If we are interested in the unavoidability of patterns having multiple colors, a natural way of approaching such questions is to consider colorings of the edges of a large enough base graph (like the complete graph or the complete bipartite graph) where a large enough density in each color is guaranteed. Indeed, without asking for a minimum amount of edges in each color, the only patterns that could be unavoidable would be monochromatic. Having this in mind, Bollobás (see \cite{CM08}) conjectured that, for any $\varepsilon > 0$, there is an integer $N(\varepsilon, t)$ such that any $2$-coloring of a complete graph $K_n$ with at least $\varepsilon {n \choose 2}$ edges in each color contains a copy of one of the following $2$-edge colored complete graphs $K_{2t}$: either one where one of the colors induces two complete graphs of order $t$
or one where one color induces a complete graph of order $t$. This conjecture was confirmed in~\cite{CM08} and an asymptotic sharp bound of $N(\varepsilon,t) \le \varepsilon^{-ct}$ for some absolute constant $c$ was later established in \cite{FoSu08}. In \cite{CHM19}, the Turán-type question of this setting was explored and it was shown that, for $n$ large enough, a subquadratic amount $n^{2-1/m(t)}$ (where $m(t)$ is a monotonically increasing function on $t$) of edges in each color is sufficient to guarantee the existence of the above patterns. Later, it was shown in \cite{GiNa19} that $m(t) = t$ works, and, conditioned to the well known conjecture of K\H{o}vári, Sós, and Turán that gives a lower bound for the extremal number $\ex(n, K_{t,t})$, it was shown that $n^{2-1/t}$ is the correct order of magnitude. Multiple color versions were also considered in \cite{BHMM, BLM, GiHa}, and a $2$-color asymmetric version as well as an exhaustive study of all unavoidable patterns depending on the order of magnitude assumed for the minimum number of edges in each color can be found in \cite{CHM_evol}. 

We note at this point that the patterns described above are the only possible colored complete graphs that one can expect (where we are thinking of the unbalanced versions, too). This is because, for an infinite number of $n$'s, there is a way of coloring a complete graph $K_n$ with half of its edges in each of the colors, and such that the edges of one of the colors induce a complete graph or such that the set of vertices can be partitioned into two monochromatic cliques, all whose shared edges are from the other color \cite{CHM_K4}. It is evident that, in these two colorings of $K_n$, the only two-colored patterns that can be found are basically those defined by Bollobás (up to proportions). If we consider now some arbitrary graph $G$ provided with a $2$-coloring of its edges, then it will not necessarily be contained in every $2$-coloring of the complete graph. For instance, it has to \emph{fit} inside the two colorings mentioned above. However, if it fits in both, then, because precisely these patterns are unavoidable, then the so-colored $G$ will appear in every $2$-edge coloring of the complete graph $K_n$ provided $n$ is sufficiently large and the coloring has sufficiently many edges from each color. Instead of considering one graph with one certain coloring, the authors from \cite{CHM19} focused on graphs that can be unavoidably found with half its edges in one color and half its edges in the other color, once we have a sufficiently large complete graph with sufficiently many edges in each of the colors. The graphs having this property are called \emph{balanceable}. They also defined \emph{omnitonal graphs}, which are graphs that can be found, in every tonal variation (i.e. with exactly $r$ red edges for every $0 \le r \le m$, where $m$ is the edge number of the graph in question) in any $2$-coloring of the complete graph with sufficiently many edges in each color. More results about balanceable graphs can be found in \cite{CHM19, BHMM, CHM_K4, CLZ19, DEHV, DHV20}.


In this paper, we begin the study of unavoidable patterns in $2$-edge colorings of complete bipartite graphs. 

A $2$-coloring of the edges of $H$ is a function $f:E(H)\to \{r,b\}$ which associates each edge to one of two colors, $r$ (red) or $b$ (blue). Notice that we can associate every $2$-coloring of the edges of $H$ to a partition $E(H )= R\sqcup B$, where $R = f^{-1}(r)$  and $B = f^{-1}(b)$. We will call the edges in $R$ \textit{red} and the edges in $B$ \textit{blue}. Throughout this text, we will standardly refer to a $2$-coloring of the edges of $H$ simply as a \textit{$2$-coloring} of $H$.
Let $G=(V,E)$ be a graph. We say that a $2$-coloring of $H$ contains a \textit{balanced copy} of $G$ if we can find a copy of $G$ in $H$ such that $E$ can be partitioned into two sets $(E_1 , E_2)$ with $E_1 \subseteq R$, $E_2 \subseteq B$ and such that $\left| |E_1|-|E_2| \right|\leq 1$. If $|E|$ is even, then the balanced copy of $G$ has exactly half of its edges in each chromatic class.

\section{Unavoidable patterns}\label{sec-unav-pat-knn}

Given a graph $H$ and a positive integer $n$, the \emph{extremal number} ${\rm ex}(n,H)$ is defined as the maximum number of edges a graph of order $n$ can have without having a copy of $H$ as a subgraph. In the bipartite context, the \emph{Zarankiewicz number} $z(n,H)$ is the maximum number of edges that a bipartite graph $G = (V \cup W, E)$ can have such that $|V| =|W| = n$ and $G$ has no copy of $H$ (where $H$ has to be assumed bipartite here). We set $z(n,t)$ for $z(n,K_{t,t})$. As mentioned in the Introduction, the K\H{o}vári-Sós-Turán theorem \cite{KST54} yields

\begin{equation}\label{eq_Zarank_nr}
   z(n,t) < (t-1)^{\frac{1}{t}}n^{2-\frac{1}{t}} + \frac{1}{2}(t-1)n.
\end{equation} 

Also note that the following result is deduced by a Dependent random choice lemma given in \cite{FoSu_DRCh}.

\begin{prop}[Dependent random choice]\label{prop-dep}
For all $m$ and $t$ positive integers, there exists a $C=C(m,t)>0$ such that any graph $G$ on $2n$ vertices with $e(G)\geq C (2n)^{2-\frac{1}{t}}$ edges contains a set $S\subset V(G)$ of $m$ vertices where every subset $X\subset S$ such that $|X|=t$ has at least $m$ common neighbors.
\end{prop}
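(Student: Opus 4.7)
The plan is to apply the standard dependent random choice paradigm. Writing $N = 2n$, I would sample a $t$-tuple $T = (v_1, \dots, v_t) \in V(G)^t$ uniformly and independently, and study the random common neighborhood $A := \bigcap_{i=1}^t N(v_i)$. The goal is to show that, with $C$ chosen large enough, there is a realization of $T$ for which $A$ is both large and contains almost no ``bad'' $t$-subsets, where a $t$-set $X \subseteq V(G)$ is called \emph{bad} when $|N(X)| < m$.

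First, I would lower bound $\mathbb{E}[|A|]$. Since $\Pr[v \in A] = (d(v)/N)^t$ for each $v \in V(G)$, summation over $v$ and the power-mean inequality give
\[
\mathbb{E}[|A|] \;=\; \frac{1}{N^t}\sum_{v \in V(G)} d(v)^t \;\geq\; N^{1-t}\!\left(\frac{2e(G)}{N}\right)^{\!t} \;=\; \frac{(2e(G))^t}{N^{2t-1}} \;\geq\; (2C)^t,
\]
where the last inequality uses the hypothesis $e(G) \geq C N^{2-1/t}$. Next, I would bound the expected number $Y$ of bad $t$-subsets contained in $A$: for each bad $X$ we have $\Pr[X \subseteq A] = (|N(X)|/N)^t < (m/N)^t$, and summing over the at most $\binom{N}{t}$ such subsets yields $\mathbb{E}[Y] < m^t/t!$.

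Finally, I would choose $C = C(m,t)$ so that $(2C)^t \geq m + m^t/t!$. Then $\mathbb{E}[|A| - Y] \geq m$, so some realization of $T$ satisfies $|A| - Y \geq m$. For that outcome, deleting one vertex of $A$ from every bad $t$-subset still present yields a subset $S' \subseteq A$ of size at least $m$ in which every $t$-subset has common neighborhood of size at least $m$; any $m$ vertices of $S'$ deliver the required $S$. I expect no serious obstacle: this is a textbook application of dependent random choice, and indeed the conclusion could alternatively be read off directly from the lemma in \cite{FoSu_DRCh}. The only points requiring slight care are the power-mean step converting the edge count into a lower bound on $\mathbb{E}[|A|]$, and the bookkeeping of the constant $C(m,t)$ pinned down by the inequality $(2C)^t \geq m + m^t/t!$.
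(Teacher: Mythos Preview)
Your argument is correct and is exactly the standard dependent random choice computation. The paper itself does not give a proof of this proposition at all: it simply states that the result ``is deduced by a Dependent random choice lemma given in \cite{FoSu_DRCh}'' and moves on, so your proposal in fact supplies strictly more detail than the paper does.
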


We now state the main result of this section where we use $K_{n,n}$ as a base graph to look for unavoidable patterns in $2$-edge colorings of its edges. 

\begin{theorem}\label{thm-unav-patt}
Let $t$ be a positive integer. For all large enough $n$ and $T\geq t$, there exists a positive integer  $z_2 =z_2 (n,t)= \mathcal{O} (n^{2-\frac{1}{t}})$ such that any coloring $E(K_{n,n})= R\sqcup B$ with at least $z_2 (n,t)$ edges in each color class contains a colored copy of $K_{T,t+T}$
where one color forms a graph isomorphic to $K_{T,T}$, or a copy of $K_{t,2T}$  where one color forms a graph isomorphic to $K_{t,T}$.
\end{theorem}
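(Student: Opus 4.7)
The plan is to combine Proposition~\ref{prop-dep} (Dependent Random Choice) applied to the blue graph with the K\H{o}v\'ari-S\'os-Tur\'an bound~\eqref{eq_Zarank_nr} applied to the red graph. Since $|R|+|B|=n^2$ and the two target patterns are symmetric in the two colors, I would assume $|R|\geq n^2/2$ after swapping colors if necessary, so red is very dense while blue has only $\geq z_2 = \mathcal{O}(n^{2-1/t})$ edges. Fix $m := T+t-1$ and take $z_2(n,t) := C(T,t)(2n)^{2-1/t}$, where $C(T,t)$ is the constant of Proposition~\ref{prop-dep} for parameters $m$ and $t$.

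Step~1 applies Proposition~\ref{prop-dep} to the blue graph. Since $K_{n,n}$ is bipartite and any mixed subset of vertices has empty common neighborhood, the set $S$ of $m$ vertices output by DRC must lie entirely on one side; WLOG $S\subseteq W$, and every $t$-subset $C\subseteq S$ has $|N_B(C)|\geq m$ common blue neighbors in $V$. Case~A: some $t$-subset $C\subseteq S$ also satisfies $|N_R(C)|\geq T$ in $V$, in which case $C$ together with any $T$-subsets of the disjoint neighborhoods $N_R(C)$ and $N_B(C)$ gives a $K_{t,2T}$ with one color forming $K_{t,T}$, i.e.\ Pattern~2. Case~B: every $t$-subset $C\subseteq S$ has $|N_R(C)|<T$. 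Then $\sum_{v\in V}\binom{d_R(v,S)}{t}=\sum_{|C|=t,\,C\subseteq S}|N_R(C)|<T\binom{m}{t}$, so only at most $T\binom{m}{t}$ vertices of $V$ have $d_R(v,S)\geq t$. The rest have $d_R(v,S)\leq t-1$, and pigeonholing over the at most $t\binom{m}{t-1}$ possible red-neighborhood sets inside $S$ of size $\leq t-1$ delivers a single $R_0\subseteq S$ serving as the red-neighborhood of a set $V^*\subseteq V$ with $|V^*|=\Omega(n)$, whose vertices are all blue-adjacent to every vertex of $S':=S\setminus R_0$, where $|S'|\geq m-(t-1)=T$.

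In Case~B I would then look for a red $K_{T,T}$ inside $V^*\times W$. Using $|R|\geq n^2/2$, one has $e_R(V^*,W)\geq |R|-n(n-|V^*|)=n|V^*|-n^2/2$; once this count exceeds the K\H{o}v\'ari-S\'os-Tur\'an bound~\eqref{eq_Zarank_nr} for $K_{T,T}$ in a bipartite graph with parts of sizes $|V^*|$ and $n$, we find $A\subseteq V^*$ with $|A|=T$ and $B_1\subseteq W$ with $|B_1|=T$ such that every edge in $A\times B_1$ is red. Since $A\subseteq V^*$, we have $S'\subseteq N_B(A)$, so $|N_B(A)|\geq T\geq t$; furthermore $B_1\cap S'=\emptyset$ because $N_R(A)\cap N_B(A)=\emptyset$. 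Picking any $t$-subset $B_2\subseteq S'$, the induced $K_{T,T+t}$ on $A\cup B_1\cup B_2$ has red edges forming $K_{T,T}$ (on $A\times B_1$) and blue edges forming $K_{T,t}$ (on $A\times B_2$), realizing Pattern~1.

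The hard part will be the final K\H{o}v\'ari-S\'os-Tur\'an application: the naive pigeonhole only guarantees $|V^*|/n \geq 1/(t\binom{m}{t-1})$, a constant that need not exceed $1/2$, and this makes the bound $n|V^*|-n^2/2$ potentially non-positive. To close the argument one either sharpens Case~B's concentration analysis to force $|V^*|\geq (1-o(1))n$ (by iterating the pigeonhole or by a more careful convexity inequality on $\sum_v\binom{d_R(v,S)}{t}$), or introduces an auxiliary sub-case: if red is too sparse on $V^*\times W$, then blue must be correspondingly dense there, and a second application of Dependent Random Choice to the blue subgraph on $V^*\times W$ yields a complementary $K_{t,T}$ giving Pattern~2 in this residual regime.
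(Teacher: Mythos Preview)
Your single-DRC strategy differs from the paper's, which applies Proposition~\ref{prop-dep} to \emph{both} colors, obtaining $m$-sets $S_1$ (from blue) and $S_2$ (from red), and then runs K\H{o}v\'ari--S\'os--Tur\'an inside the resulting $m\times m$ grid (splitting on whether $S_1,S_2$ lie on the same side of the bipartition or on opposite sides); that route never needs to lower-bound $e_R$ into a subset of $V$ whose relative size is only a small constant. The gap you flag in your Case~B is genuine and, as written, the argument does not close: with $m=T+t-1$ the pigeonhole over the red-neighbourhood $R_0\subseteq S$ only yields $|V^*|\ge n\big/\sum_{i\le t-1}\binom{m}{i}$, which for $t=T=2$ is already $n/4$, so $n|V^*|-n^2/2<0$. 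Your proposed repairs are not viable as stated: once a single $R_0$ is fixed there is no convexity argument forcing $|V^*|\ge(1-o(1))n$, and a second DRC on blue restricted to $V^*\times W$ would need a lower bound on $e_B(V^*,W)$ that you do not have (the blue edges could all sit outside $V^*$).

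There is, however, a cheap fix that preserves your outline: drop the pigeonhole on $R_0$ and enlarge $m$ to any constant at least $T(t-1)+t$. Set $V'=\{v\in V:d_R(v,S)\le t-1\}$; your double-count already gives $|V'|\ge n-T\binom{m}{t}=n-O_{T,t}(1)$, hence $e_R(V',W)\ge n^2/2-O(n)$, comfortably above the K\H{o}v\'ari--S\'os--Tur\'an threshold for $K_{T,T}$ in an $n\times n$ bipartite graph. For the resulting red $K_{T,T}$ on $A\times B_1$ with $A\subseteq V'$, the common blue neighbourhood of $A$ inside $S$ has size at least $m-\sum_{a\in A}d_R(a,S)\ge m-T(t-1)\ge t$; any $t$-subset $B_2$ there is automatically disjoint from $B_1$ and completes Pattern~1.
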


\begin{proof}

Let $K_{n,n}$ be the complete bipartite graph with partition sets $A$ and $B$, $n$ large enough and let $m$ and $T$ be integers such that $n> m\geq T\geq t$ and 
\begin{equation}\label{bound}
\frac{m^{2}}{2}\geq (T-1)^{\frac{1}{T}}n^{2-\frac{1}{T}} + \frac{1}{2}(T-1)n\geq  (t-1)^{\frac{1}{t}}n^{2-\frac{1}{t}} + \frac{1}{2}(t-1)n
\end{equation}

holds. Let $z_2=C(2n)^{2-\frac{1}{t}}$  where $C=C(m,t)$ is large enough. Let $E(K_{n,n}) = R\sqcup B$ be any $2$-edge coloring with at least $z_2$ edges in each color class. This is possible because $z_2(n,t)=o(n^{2})$. By Proposition~\ref{prop-dep}, the amount of blue edges guarantees that there is a set of vertices $S_1$ in $A$ (without loss of generality) with $|S_1|=m$ and such that any subset $T_1\subseteq S_1$ of order $t$ must have a common blue neighborhood $M_1$ of at least $m$ neighbors. 

A similar argument can be made for the red edges. There is also a set of vertices $S_2$ with $|S_2|=m$ such that any subset $T_2\subseteq S_2$ of order $t$ must have a common red neighborhood $M_2$ of at least $m$ neighbors. We will divide the proof in two cases, one being when $S_2$ is contained in $A$ and the other when $S_2$ is contained in $B$. Notice that $S_2$ cannot intersect both $A$ and $B$, because that would yield the existence of edges in one of the parts $A$ or $B$.


If $S_2 \subset A$, take any subset $T_1\subset S_1$ of order $t$. The set $T_1$ has a common blue neighborhood $M_1$ of order $m$ in $B$. Now we will use the edges induced by $S_2$ and $M_1$ to find a monochromatic $K_{T,T}$. By the pigeonhole principle, there are at least $\frac{m^2}{2}$ edges induced by $S_2$ and $M_1$ in one color class. Because of \cref{bound}, the amount of these edges is an upper bound of $z(m,T)$ (by \cref{eq_Zarank_nr}). Hence there is a monochromatic $K_{T,T}$ in the graph induced by $S_2 \cup M_1$. If this $K_{T,T}$ is red, then the graph induced by $T_1 \cup M_1 \cup S_2$ is a $K_{T,t+T}$, where one color forms a graph isomorphic to $K_{T,T}$. If the $K_{T,T}$ in the graph induced by $S_2 \cup M_1$ is blue and has partition sets $X$ and $Y$ where $X\subset S_2$ and $Y\subset M_1$ then we take a subset $X_1\subseteq X$ with $t$ vertices and by Proposition~\ref{prop-dep} $X_1$ has a red neighborhood $M_2$ of order $m$ in $B$. Notice that the graph induced by $X_1\cup Y \cup M_2$ is a $K_{t,2T}$ where one color forms a graph isomorphic to $K_{T,T}$.

If $S_2 \subset B$, there are at least $\frac{m^2}{2}$ edges of the same color, say red, between the sets $S_1$ and $S_2$ by the pigeonhole principle. Because $\frac{m^{2}}{2}\geq (T-1)^{\frac{1}{T}}n^{2-\frac{1}{T}} + \frac{1}{2}(T-1)n > z(m,T)$, there is a red $K_{T,T}$ contained in the graph induced by the sets $S_1$ and $S_2$. Let $T_1$ and $T_2$ be the partition sets of the red $K_{T,T}$, with $T_1 \subset S_1$ and $T_2 \subset S_2$. By Proposition~\ref{prop-dep} we know that any $t$-subset of $T_1$ has a blue common neighborhood $M_1$ of order $m$. The graph induced by $T_1 \cup T_2 \cup M_1 $ is a $K_{t,2T}$ where one color forms a graph isomorphic to $K_{t,T}$. If the $K_{T,T}$ contained in the graph induced by the sets $T_1$ and $T_2$ is blue, we make a similar argument simply taking any $t$-subset from $T_2$ and its red common neighborhood in $A$. In this case, we also find a $K_{t,2T}$ where one color forms a graph isomorphic to $K_{t,T}$.

\end{proof}

Notice that the parameter $T$, that can be chosen arbitrarily large, allows the existence of a monochromatic complete bipartite graph with a very large side in every $2$-edge coloring of $E(K_{n,n})$ with sufficient edges in each color. As a direct consequence, if $T=t$, we obtain only one colored pattern. This pattern will appear no matter the value of $T$, which is the reason we use the following corollary throughout the rest of the work.

\begin{corollary}\label{coro-unav-pat-Knn}
Let $t$ be a positive integer. For all large enough $n$, there is a positive integer  $z_2= z_2 (n,t)= \mathcal{O} (n^{2-\frac{1}{t}})$ such that any coloring $E(K_{n,n})= R\sqcup B$ with at least $z_2$ edges in each color class contains a colored copy of $K_{t,2t}$
where one color forms a graph isomorphic to $K_{t,t}$.
\end{corollary}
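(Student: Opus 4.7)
The plan is to derive this corollary directly from Theorem~\ref{thm-unav-patt} by specializing the parameter $T$. Recall that the theorem guarantees, for every $T\geq t$, one of two unavoidable colored patterns inside any $2$-edge coloring of $K_{n,n}$ with at least $z_2(n,t) = \mathcal{O}(n^{2-1/t})$ edges in each color class:
\begin{itemize}
\item a copy of $K_{T,t+T}$ in which one color forms a $K_{T,T}$, or
\item a copy of $K_{t,2T}$ in which one color forms a $K_{t,T}$.
\end{itemize}
The asymptotic bound on $z_2(n,t)$ does not depend on $T$, so any choice of $T$ with $T \geq t$ yields the same quantitative guarantee.

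The key observation is that setting $T = t$ collapses both possible outcomes into a single pattern. Indeed, with $T = t$, the first outcome becomes a $K_{t, 2t}$ in which one color forms a $K_{t,t}$, and the second outcome is also a $K_{t, 2t}$ in which one color forms a $K_{t,t}$. Thus, regardless of which of the two cases of Theorem~\ref{thm-unav-patt} occurs, we obtain precisely the desired pattern.

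The proof therefore consists simply of invoking Theorem~\ref{thm-unav-patt} with the choice $T = t$ (which trivially satisfies the condition $T \geq t$), recording the resulting pattern, and noting that the bound $z_2(n,t) = \mathcal{O}(n^{2-1/t})$ is inherited unchanged. There is no genuine obstacle here: the content of the corollary is entirely contained in the theorem, and the corollary's role in the paper is to extract the minimal pattern that holds \emph{independently} of any choice of $T$, since as remarked in the paragraph preceding the corollary this pattern persists no matter how large $T$ is taken.
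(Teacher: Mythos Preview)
Your proposal is correct and matches the paper's own derivation exactly: the paper states the corollary as a direct consequence of Theorem~\ref{thm-unav-patt} obtained by taking $T=t$, so that both unavoidable patterns collapse into the single pattern $K_{t,2t}$ with one color class forming a $K_{t,t}$.
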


The following proposition employs a result by Alon, Rónyai and Szabó, which states that for a sufficiently large $n$ and integers $s$ and $t$ that satisfy $1\leq s \leq 3 \leq t$ or $t\geq (s-1)!+1$, the value of $\ex(n, G)$ is tight \cite{ARS99}. We use this result to show some values for which $z_2(n,t)$ is tight. Note that any graph $H$ on $n$ vertices and $\ex(n, G)$ edges that does not contain $G$ as a subgraph can be used to define a bipartite graph $H'$ with $n$ vertices in each part and $z(n,G)$ edges with no copies of $G$. This technique is used in the proof of the inequality $2\ex(n,G)\leq z(n, G)$ when $G$ is a bipartite graph. With this in mind, one can give a $2$-edge coloring of $E(K_{n,n})$ where one color, say red, forms the before mentioned graph $H'$ and as a consequence, there are no red copies of $G$ in this coloring. This can also be extended to a coloring of the edges of a complete bipartite graph, which is the idea of the proof of the lower bound of $z_2 (n,t)$.

\begin{prop}

For sufficiently large $n$ and integers $s$ and $t$ that satisfy $1\leq s \leq 3 \leq t$ or $t\geq (s-1)!+1$, \Cref{thm-unav-patt} is tight.

\end{prop}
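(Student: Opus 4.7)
The plan is to produce, under the stated conditions on $(s,t)$, an explicit $2$-edge coloring of $K_{n,n}$ with $\Theta(n^{2-1/s})$ edges in one color class that contains no copy of either unavoidable pattern of \Cref{thm-unav-patt}. Matching this against the upper bound $z_2=\mathcal{O}(n^{2-1/s})$ from \Cref{thm-unav-patt} (applied with its parameter in the role of $s$ and with the choice $T=t$) then establishes the tightness.

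The hypotheses $1\le s\le 3\le t$ or $t\ge (s-1)!+1$ are exactly those under which \cite{ARS99} gives $\ex(n,K_{s,t})=\Theta(n^{2-1/s})$ for large $n$; in both regimes one also has $s\le t$. Fix an extremal graph $H$ on $n$ vertices achieving this value with no copy of $K_{s,t}$, and, as indicated in the paragraph preceding the proposition, pass to a bipartite graph $H'\subseteq K_{n,n}$ with parts $A,B$ of size $n$, $|E(H')|=2|E(H)|=\Theta(n^{2-1/s})$, and no $K_{s,t}$ in either orientation. Set $R:=E(H')$ and color the remaining edges blue; both color classes exceed $z_2$ for $n$ large enough.

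The key feature is that, by the $K_{s,t}$-freeness of $H'$ combined with $K_{s,t}\cong K_{t,s}$, no $s$-set on either side of $K_{n,n}$ has $t$ common red neighbors across, and no $t$-set has $s$ common red neighbors. Invoking \Cref{thm-unav-patt} with $T=t$, the two unavoidable templates become
\[
K_{t,s+t}\ \text{with a monochromatic } K_{t,t}\qquad \text{and}\qquad K_{s,2t}\ \text{with a monochromatic } K_{s,t}.
\]
Since $s\le t$, a red $K_{t,t}$ would contain a red $K_{s,t}$, and a red $K_{s,t}$ is outright forbidden, so the monochromatic block cannot be red. If it is blue, then the remaining edges of the template (all necessarily red) join the $t$-side of the blue $K_{t,t}$ to $s$ additional vertices, forming a red $K_{t,s}$, or join the $s$-side of the blue $K_{s,t}$ to $t$ additional vertices, again giving a red $K_{s,t}$. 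Either way the $K_{s,t}$-freeness of $H'$ is violated, so no such template occurs.

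Consequently $z_2(n,s)=\Omega(n^{2-1/s})$, matching the upper bound $\mathcal{O}(n^{2-1/s})$ and proving tightness. The main subtlety is recognizing that the ``other color'' piece of each unavoidable template is itself a $K_{s,t}$ (or contains one), so that $K_{s,t}$-freeness of $H'$ alone\,---\,rather than the substantially more restrictive $K_{s,s}$-freeness\,---\,already suffices; this is precisely why the ARS regimes $1\le s\le 3\le t$ or $t\ge (s-1)!+1$ are the appropriate ones to invoke.
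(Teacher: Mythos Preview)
Your proof is correct and follows the same approach as the paper: use the Alon--R\'onyai--Szab\'o construction to obtain a bipartite $K_{s,t}$-free graph $H'$ with $\Theta(n^{2-1/s})$ edges, color its edges with one color, and observe that neither unavoidable template can appear because in every case (whichever color takes the monochromatic block) the remaining piece forces a $K_{s,t}$ in the $H'$-color. One small wording issue: the phrase ``both color classes exceed $z_2$ for $n$ large enough'' is not what you want---if both classes exceeded the threshold $z_2$ of \Cref{thm-unav-patt} then a pattern would be guaranteed---so replace it by ``both color classes have $\Omega(n^{2-1/s})$ edges,'' after which your conclusion $z_2(n,s)=\Omega(n^{2-1/s})$ follows exactly as you state.
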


\begin{proof}
 In \cite{ARS99}, Alon, Rónyai and Szabó exhibit a graph $H$ on at most $n$ vertices which is $K_{s,t}$-free for integers $s$ and $t$ that satisfy $1\leq s \leq 3 \leq t$ or $t\geq (s-1)!+1$. By the commonly known inequality $2\ex(n,G)\leq z(n, G)$ for $G$ bipartite, we can extend this graph $H$ to a bipartite graph $H'$ with at most $n$ vertices in each part which is also $K_{s,t}$-free.  We now give a $2$-edge coloring $E(K_{n,n})=R\sqcup B$ where one color, say blue, forms a copy of $H'$ and the rest of the edges are red. Because $e(H')=\mathcal{O}(n^{2-\frac{1}{s}})$, the edge coloring has at least $\mathcal{O}(n^{2-\frac{1}{s}})$ edges in each color. Therefore, this coloring does not contain blue copies of $K_{s,t}$ when $s$ and $t$ satisfy $1\leq s \leq 3 \leq t$ or $t\geq (s-1)!+1$. Note that this coloring does not contain colored copies of $K_{T,s+T}$ where one color forms a graph isomorphic to $K_{s,T}$, or a copies of $K_{s,2T}$ where one color forms a graph isomorphic to $K_{s,T}$ for $T\geq t$ and the stated values of $s$ and $t$. We have reached the desired result.

\end{proof}

\section{Bipartite omnitonality}\label{sec-omnitonality-knn}

We now define the concept of bipartite $r$-tonality and bipartite omnitonality on the base graph $K_{n,n}$, along with a characterization for bipartite $r$-tonal graphs and a proof that every tree is bipartite omnitonal.

\begin{definition}
Let $G$ be a bipartite graph and $r$ an integer with $0\leq r \leq \lfloor \frac{e(G)}{2}\rfloor$. Let $\bbal_{r}(n,G)$ be the minimum integer, if it exists, such that every $2$-edge coloring $E(K_{n,n})=R\sqcup B$ with $\min\{|R|, |B|\}> \bbal_{r}(n,G)$ contains either an $(r, e(G)-r)-$colored copy of $G$, or an $(e(G)-r,r)-$colored copy of $G$.  If $\bbal_{r}(n,G)$ exists for every $n$ sufficiently large, we say that $G$ is \emph{bipartite $r$-tonal}.
\end{definition}

Now we can state the definition of bipartite omnitonality.

\begin{definition}
Let $G$ be a bipartite graph and  let $\bo(n,G)$ be the minimum integer, if it exists, such that any $2$-coloring $E(K_{n,n})=R\sqcup B$ with $\min \{|R|, |B|\}> \bo(n,G)$ contains an $(r,b)-$colored copy of $G$ for any $r\geq 0$ and $b\geq 0$ such that $r+b=e(G)$. If $\bo(n,G)$ exists for a large enough $n$, we say that $G$ is \emph{bipartite omnitonal}. For a bipartite omnitonal graph $G$, let $\Bo(n,G)$ be the family of graphs $H$ on exactly $\bo(n,G)$ edges such that there is a coloring $E(K_{n,n})=R\sqcup B$ with $|R|=\bo(n,G)$ and with no $(r,b)$-colored copy of $G$ for some pair $r,b\geq 0$ with $r+b=e(G)$ and such that the graph induced by $R$ is isomorphic to $H$.
\end{definition}

Observe that if $G$ is bipartite $r$-tonal, then $\bbal_r (n,G)\leq \frac{n^2}{2}$. Also $\bo(n,G) \leq \frac{n^2}{2}$ when $G$ is a bipartite omnitonal graph.

\begin{theorem}\label{thm-carac-b-r-tonal}
Let $G$ be a bipartite graph and let $r$ be a positive integer with $r\leq \lfloor \frac{e(G)}{2} \rfloor$. Then $G$ is bipartite $r$-tonal if and only if there is a set of vertices $U\subseteq V(G)$ such that $e(U,N(U) )=r$ and $U$ is contained in one of the partition sets of $G$. 
\end{theorem}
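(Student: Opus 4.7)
The strategy is to prove both directions separately: sufficiency by invoking the unavoidable colored pattern from Corollary~\ref{coro-unav-pat-Knn} and then realising $G$ inside it with the prescribed color distribution; necessity by exhibiting an explicit ``split'' coloring of $K_{n,n}$ with roughly $n^2/2$ edges in each colour that avoids every $(r,e(G)-r)$- and $(e(G)-r,r)$-colored copy of $G$. Throughout, let $G$ have bipartition $X\cup Y$.

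\textbf{Sufficiency.} Suppose $U$ lies in one partition set of $G$, say $U\subseteq X$, with $e(U,N(U))=r$; the case $U\subseteq Y$ is symmetric. Set $t:=|V(G)|$ and apply Corollary~\ref{coro-unav-pat-Knn}: for $n$ large, any $2$-coloring of $K_{n,n}$ with more than $z_2(n,t)$ edges in each colour contains a colored $K_{t,2t}$ whose small side $S$ of size $t$ is joined to a partition $L=B_1\cup B_2$ of the large side into parts of size $t$, with all $S$--$B_1$ edges of one colour and all $S$--$B_2$ edges of the other (the corollary guarantees one colour forms a $K_{t,t}$, and the complementary edges automatically form the other $K_{t,t}$). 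Relabel so that $S$--$B_1$ is blue and $S$--$B_2$ is red. I would then embed $G$ by sending $Y$ injectively into $S$ (possible since $|Y|\le t$), $U$ into $B_2$, and $X\setminus U$ into $B_1$ (possible since $|X|\le t$). Every edge of $G$ is a $Y$-to-$X$ edge, and its colour depends only on which half of $L$ its $X$-endpoint lies in, so the red edges are exactly those incident to $U$, numbering $\sum_{u\in U}\deg_G(u)=e(U,N(U))=r$. Thus $\bbal_r(n,G)\le z_2(n,t)$ and $G$ is bipartite $r$-tonal.

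\textbf{Necessity.} Contrapositively, assume no subset $U$ of either partition set of $G$ has $e(U,N(U))=r$. Let $A,B$ be the sides of $K_{n,n}$, fix $W\subseteq A$ with $|W|=\lfloor n/2\rfloor$, colour all edges incident to $W$ blue and the rest red; this yields at least $\lfloor n/2\rfloor\cdot n$ edges of each colour, close to $n^2/2$. For any copy $\phi(G)$ of $G$ in $K_{n,n}$, bipartiteness forces $X$ and $Y$ onto opposite sides, so the preimage $U_\phi:=\phi^{-1}(W)$ sits inside a single partition set $P\in\{X,Y\}$ of $G$. The blue edges of this copy are exactly those incident to $U_\phi$, counting $e(U_\phi,N(U_\phi))$. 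By hypothesis this value is not $r$; applying the hypothesis also to $P\setminus U_\phi$ and using the bipartite identity $e(P\setminus U_\phi,N(P\setminus U_\phi))=e(G)-e(U_\phi,N(U_\phi))$ gives $e(U_\phi,N(U_\phi))\ne e(G)-r$ as well. Hence no $(r,e(G)-r)$- or $(e(G)-r,r)$-colored copy of $G$ exists, and since this construction supplies $\sim n^2/2$ edges in each colour for every large $n$, no nontrivial bound on $\bbal_r(n,G)$ is possible, i.e., $G$ is not bipartite $r$-tonal.

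\textbf{Main obstacle.} The principal delicacy is the sufficiency embedding: one must align the split $L=B_1\cup B_2$ with the split $X=U\cup(X\setminus U)$ so that each edge of $G$ inherits the intended colour, which crucially exploits the bipartite identity $e(U,N(U))=\sum_{u\in U}\deg_G(u)$; choosing $t=|V(G)|$ leaves enough room in $K_{t,2t}$ for the embedding regardless of whether $U\subseteq X$ or $U\subseteq Y$. The necessity direction is essentially an extremal construction, and its only subtle point is transferring the hypothesis on $U$ to the case $e(G)-r$ via complementation inside a partition set of $G$.
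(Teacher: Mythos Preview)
Your proof is correct and follows essentially the same approach as the paper. Both directions rely on the same two constructions: sufficiency via the unavoidable $K_{t,2t}$ pattern from Corollary~\ref{coro-unav-pat-Knn} with $t=|V(G)|$, embedding $G$ so that $U$ lands in one half of the large side; and necessity via the ``split'' coloring of $K_{n,n}$ in which one partition side is divided into a red block and a blue block of size about $n/2$. The only cosmetic difference is that the paper argues necessity directly (assuming $G$ is bipartite $r$-tonal, locating the guaranteed colored copy in the split coloring, and reading off $U$ from the vertices landing in the red block), whereas you argue by contrapositive and explicitly rule out both $r$ and $e(G)-r$ blue edges via the complementation identity $e(P\setminus U_\phi,N(P\setminus U_\phi))=e(G)-e(U_\phi,N(U_\phi))$.
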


\begin{proof}
Let $G$ be bipartite $r$-tonal and $n$ even and sufficiently large such that $\bbal_r (n, G)$ exists. Let the partition sets of $K_{n,n}$ be $X$ and $Y$ with $Y=Y_r \sqcup Y_b$ and $|Y_r|=\frac{n}{2}$. Let $E(K_{n,n})=R\sqcup B$ be a $2$-edge coloring where the red edges are all possible edges between the sets $X$ and $Y_r$ and the blue edges are all possible edges between $X$ and $Y_b$.
Because $G$ is bipartite $r$-tonal and $|R|=|B|=\frac{n^2}{2}$, we can guarantee the existence of a colored copy of $G$ where one color class, say red, contains exactly $r$ edges. Let $U=V(G)\cap Y_r$. This must be a set of independent vertices that satisfies that $e_G(U,N_G(U))=r$ and $U$ is clearly contained in one of the partition sets of $K_{n,n}$, and hence, of $G$.

Conversely, let $G$ be a bipartite graph that contains a set of vertices $U$ such that $e_G(U,N_G(U) )=r$ and $U$ is contained in one of the partition sets of $G$. Let $t=n(G)$ and let $E(K_{n,n})=R\sqcup B$ be a $2$-edge coloring, where $n$ is large enough, with more than $z_2 (n,t)$ edges in each color, just as in Theorem~\ref{thm-unav-patt}. By \Cref{coro-unav-pat-Knn}, there is a colored copy of a $K_{t,2t}$ where one color forms a $K_{t,t}$. Let $X=X_b \cup X_r$ and $Y$ be the partition sets of this $K_{t,2t}$, where $|X_b|=|X_r|=|Y|=t$ and such that $E(X_b , Y) \subseteq B$ and $E(Y_r , X) \subseteq R$.

Let $X_G$ and $Y_G$ be the partition sets of $V(G)$ such that $U\subseteq X_G$. Now we can embed $G$ in this $K_{t,2t}$ by placing the set $U$ inside $X_b$, the set $X_G\setminus U$ inside $X_r$ and the set $Y_G$ inside $Y$. This copy of $G$ has exactly $r$ blue edges. Hence, $G$ is bipartite $r$-tonal.
%
%
%

\end{proof}

Similarly as in \Cref{thm-carac-b-r-tonal}, we can prove a characterization of bipartite omnitonal graphs.

\begin{theorem}\label{thm-carac-b-omnitonal}
A bipartite graph $G$ is bipartite omnitonal if and only if for every $0\leq r\leq e(G)$ there is a set of vertices $U_r \subseteq V(G)$ such that $e_G(U_r ,N_G(U_r) )=r$ and $U_r$ is contained in one of the partition sets of $G$. 
\end{theorem}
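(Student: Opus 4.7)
The plan is to mirror the two-directional template of \Cref{thm-carac-b-r-tonal}, but now quantified uniformly over every admissible value of $r$. In both directions the argument factors through the same coloring/embedding dichotomy used there: in the necessity direction we fabricate a single extremal coloring that forces a combinatorial witness $U_r$ inside $G$ for each $r$, and in the sufficiency direction we feed the family $\{U_r\}$ into the host structure produced by \Cref{coro-unav-pat-Knn}.

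For necessity, assume $G$ is bipartite omnitonal, fix $n$ even and large enough that $\bo(n,G)$ exists, and let the parts of $K_{n,n}$ be $X$ and $Y$. Split $Y = Y_1 \sqcup Y_0$ with $|Y_1|=|Y_0|=n/2$, color every edge of $E(X,Y_1)$ red and every edge of $E(X,Y_0)$ blue; then $|R|=|B|=n^2/2 > \bo(n,G)$. For each $r\in\{0,1,\dots,e(G)\}$ omnitonality yields a copy of $G$ with exactly $r$ red edges. Let $A_G,B_G$ be the parts of $G$, embedded as $A_G\subseteq X$ and $B_G\subseteq Y$ (the reversed embedding is handled symmetrically), and set $U_r := B_G\cap Y_1$. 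Every red edge of this copy has its $Y$-endpoint in $Y_1$ and lives in $G$, so it contributes to $e_G(U_r,A_G)$; since $N_G(U_r)\subseteq A_G$, we obtain $e_G(U_r,N_G(U_r))=r$, with $U_r$ inside the single part $B_G$ of $G$. The boundary cases $r=0$ and $r=e(G)$ correspond to $U_r=\emptyset$ and $U_r=B_G$ and require no separate treatment.

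For sufficiency, suppose a family $\{U_r\}_{r=0}^{e(G)}$ as in the statement exists and set $t:=n(G)$. Given any $2$-coloring $E(K_{n,n})=R\sqcup B$ with $\min\{|R|,|B|\}> z_2(n,t)$, \Cref{coro-unav-pat-Knn} supplies a colored copy of $K_{t,2t}$ whose parts we write as $X=X_R\cup X_B$ and $Y$, with $|X_R|=|X_B|=|Y|=t$, $E(X_R,Y)\subseteq R$ and $E(X_B,Y)\subseteq B$. Fix any $r$, let $X_G$ be the partition set of $G$ containing $U_r$ and let $Y_G$ be the other, and embed $U_r$ into $X_R$, $X_G\setminus U_r$ into $X_B$, and $Y_G$ into $Y$; this is feasible since $|X_G|,|Y_G|\le t$. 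The $e_G(U_r,N_G(U_r))=r$ edges of $G$ incident to $U_r$ are then red and the remaining $e(G)-r$ edges are blue, giving an $(r,e(G)-r)$-colored copy of $G$. Since $r$ is arbitrary, $G$ is bipartite omnitonal with $\bo(n,G)\le z_2(n,t)$. No genuine obstacle arises beyond recording that the single coloring used for necessity forces a witness $U_r$ for every $r$ simultaneously, and that the single host $K_{t,2t}$ used for sufficiency can absorb every such witness independently; this decoupling across $r$-values is what makes a characterization via a local combinatorial condition on $G$ possible.
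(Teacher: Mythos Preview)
Your proof is correct and follows essentially the same approach as the paper's: the same split coloring of $K_{n,n}$ (one color forming a $K_{n,n/2}$) is used for necessity, and the same embedding into the $K_{t,2t}$ host from \Cref{coro-unav-pat-Knn} is used for sufficiency. Your write-up is in fact slightly more explicit than the paper's in two places---you note the symmetric case where the parts of $G$ land in $Y$ and $X$ rather than $X$ and $Y$, and you record the resulting bound $\bo(n,G)\le z_2(n,t)$---but the underlying argument is identical.
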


\begin{proof}
Suppose $G$ is bipartite omnitonal. Let $n$ be even and large enough so that $\bo(n,G)$ exists. Consider an edge coloring $E(K_{n,n})=R\sqcup B$ where one color forms a $K_{n,\frac{n}{2}}$. Since $G$ is bipartite omnitonal and $\bo(n, G)\leq \frac{n^2}{2}=|R|=|B|$, there must be a copy of $G$ in $K_{n,n}$ with $r$ red edges for every $0\leq r \leq e(G)$. This implies that, for every $0\leq r\leq e(G)$, there is a set of vertices $U_r$ such that $e_G(U_r ,N_G(U_r) )=r$ and $U_r$ is contained in one of the partition sets of $G$.

Conversely, suppose that for every $0\leq r\leq e(G)$, there is a set of vertices $U_r \subseteq V(G)$ such that $e_G(U_r ,N_G(U_r) )=r$ and $U_r$ is contained in one of the partition sets of $G$. Let $E(K_{n,n})=R\sqcup B$ be a $2$-edge coloring with more than $z= z_2(n, t)$, where $t=n(G)$ and $z$ is like in \Cref{coro-unav-pat-Knn}. Therefore, if $n$ is sufficiently large, there is a copy of $K_{t,2t}$ (with partition sets $X$ and $Y$) where one color, say red, forms a graph isomorphic to $K_{t,t}$ (with partition sets $X$ and $Y_1 \subseteq Y$). Because there is a set of vertices $U_r \subseteq V(G)$ such that $e_G(U_r ,N_G(U_r) )=r$ and $U_r$ is contained in one of the partition sets of $G$, we can place $U_r$ in $Y_1$ to find a copy of $G$ with $r$ red edges and $e(G)-r$ blue edges for every $0\leq r\leq e(G)$. This concludes the proof.
\end{proof}

As in the setting where $K_n$ is the base graph \cite{CHM19} it turns out that all trees are bipartite omnitonal.

\begin{theorem}\label{trees-b-omnit}
Every tree $T$ is bipartite omnitonal.
\end{theorem}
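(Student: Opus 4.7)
The plan is to invoke the characterization theorem for bipartite omnitonal graphs just established: $T$ is bipartite omnitonal if and only if for every $0 \le r \le e(T)$ there is a set $U_r \subseteq V(T)$ contained in one partition class of $T$ with $e_T(U_r, N_T(U_r)) = r$. Since each partition class of the bipartite tree $T$ is independent, every edge incident to such a $U_r$ has its other endpoint in $N_T(U_r)$ and is counted exactly once, so $e_T(U_r, N_T(U_r)) = \sum_{u \in U_r} \deg_T(u)$. The task therefore reduces to a purely combinatorial statement: for every tree $T$ with bipartition $(X, Y)$ and every $r \in \{0, 1, \ldots, e(T)\}$, some subset $U \subseteq X$ or $U \subseteq Y$ satisfies $\sum_{u \in U} \deg_T(u) = r$.

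I would prove this combinatorial statement by induction on $|V(T)|$. The cases $|V(T)| \le 2$ are immediate. For $|V(T)| \ge 3$, pick a leaf $\ell$ of $T$ whose neighbor $v$ is not itself a leaf (such a choice always exists), and apply the inductive hypothesis to $T' = T - \ell$. The degrees in $T$ and $T'$ agree everywhere except that $\deg_T(v) = \deg_{T'}(v) + 1$ and $\deg_T(\ell) = 1$. Writing $S_X(T), S_Y(T)$ for the sets of achievable degree-sums, and assuming WLOG $\ell \in X$ and $v \in Y$, one verifies by a short direct computation that
\[
S_X(T) = S_{X'}(T') \cup \bigl(S_{X'}(T') + 1\bigr), \qquad
S_Y(T) = A \cup \bigl(A + \deg_T(v)\bigr),
\]
where $A$ denotes the set of subset-sums of degrees over $Y \setminus \{v\}$ (which is the same in $T$ and $T'$), and $S_{Y'}(T') = A \cup (A + \deg_{T'}(v))$.

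Apart from the trivial boundary cases $r \in \{0, e(T)\}$, each intermediate $r$ is handled by a case analysis that uses both inductive guarantees $r \in S_{X'}(T') \cup S_{Y'}(T')$ and $r - 1 \in S_{X'}(T') \cup S_{Y'}(T')$. Distinguishing according to which side of $T'$ realizes each of these values, and, when it is the $Y'$-side, according to whether the realizing subset contains $v$, one concludes that $r$ belongs to $S_X(T) \cup S_Y(T)$, either by exploiting the $+1$ shift on the $X$-side of $T$ provided by inserting $\ell$, or by using the upgraded degree of $v$ on the $Y$-side. The main obstacle will be the bookkeeping in this step: one must check that the case analysis is exhaustive. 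The key leverage is that the inductive hypothesis gives two independent degrees of freedom in $T'$ (a choice of side for $r$ and a choice of side for $r - 1$), and these combine with the two available lifts from $T'$ to $T$ to cover every configuration.
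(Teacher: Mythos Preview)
Your overall strategy matches the paper's: reduce via the characterization to showing that, for every $0 \le r \le e(T)$, some subset $U_r$ of one bipartition class has $\sum_{u\in U_r}\deg_T(u)=r$, and prove this by induction, deleting a leaf. The divergence is in how you execute the inductive step, and this is where your proposal has a genuine gap.

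The paper's inductive step is a one-line lift. Delete an arbitrary leaf $v$ with neighbour $u$; by induction $T'=T-v$ has a witness $U'_r$ for every $0\le r\le e(T)-1$. If $u\in U'_r$, set $U_{r+1}=U'_r$ (the new edge $uv$ raises the count by one); if $u\notin U'_r$, set $U_{r+1}=U'_r\cup\{v\}$. With $U_0=\emptyset$ this covers all of $\{0,\dots,e(T)\}$. No bookkeeping with two values of $r$ at once, no splitting of $S_{Y'}$ through an auxiliary set $A$, no restriction on the chosen leaf.

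Your plan, by contrast, tracks both $r$ and $r-1$ in $T'$ simultaneously and proposes a case analysis that you explicitly do not carry out. After the easy cases you are left with the configuration
\[
r,\,r-1\notin S_{X'}(T'),\qquad r-1\in A,\quad r\notin A,\quad r-\deg_{T'}(v)\in A,\quad r-1-\deg_{T'}(v)\notin A,
\]
and nothing in what you have written excludes it; handling it would require additional structural facts about degree multisets in trees that you do not supply. So the ``exhaustive'' check you flag as the main obstacle is in fact unresolved, and the extra machinery (non-leaf neighbour, the set $A$, the two-value induction) is not buying you anything over the paper's direct argument. I would recommend abandoning the case split and lifting $U'_r$ to $U_{r+1}$ directly as the paper does.
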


 \begin{proof}
 Let $T$ be a tree with $e(T)=k$. In view of \Cref{thm-carac-b-r-tonal}, we prove that, for every $0 \leq r \leq e(T)$, there is a set $U_r$ in one of the partition sets of $T$ such that $e_T (U_r, N_T (U_r))=r$ by induction on $k$. 
 
If $k=1$, we may that $U_0$ as the empty set and $U_1$ as a single vertex. Both sets satisfy that $e_T (U_r, N_T (U_r))=r$ for $r=0,1$. Let $v$ be a leaf in $V(T)$ and let $u$ be its only neighbor. By the induction hypothesis, the tree $T'=T-v$ is bipartite omnitonal, which means that, for every $0 \leq r\leq k-1$, there is a set $U'_r$ contained in one of the partition sets of $T'$ such that $e_{T'}(U'_r , N_{T'}(U'_r))=r$.
 
For each $r$, there are two cases that can happen. If $u\in U'_r$, then $v\in V(T)\setminus U' _r$, and we may take $U_{r+1} = U' _r$ so that $e_T (U_{r+1} , N_T (U_{r+1} ))=r+1$. The second case is that $u\in V(T)\setminus U' _r$. In this case, we let $U_{r+1}= U' _r \cup \{v\}$. Hence for every $1\leq r \leq k$, there is a set $U_r$ that satisfies the theorem. The only case missing is when $r=0$. In this case we can take $U_0$ as the empty set because $e_T (U_0 , N_T (U_0))=0$. This concludes the proof.

 \end{proof}

\section{Bipartite balanceability and the bipartite balancing number}\label{sec-bip-balanceability-bip-bal-num}
The bipartite balancing number is the analogue of the balancing number introduced in \cite{CHM19} but defined on the base graph $K_{n,n}$. In this section, we provide the bipartite balancing number for paths and stars.
\begin{definition}
If there exists an integer $k=k(n)$ such that, for $n$ large enough, every $2$-edge coloring $R\sqcup B$ of $E(K_{n,n})$ with more than $k$ edges in each color class contains a balanced copy of $G$, then we say $G$ is \emph{bipartite balanceable}. The smallest such $k$ is called the bipartite balancing number of $G$ and it is denoted as \emph{$\bbal(n,G)$}. For a bipartite balanceable graph $G$, let $\Bbal(n,G)$ be the family of graphs with exactly $\bbal (n, G)$ edges such that a $2$-edge coloring $E(K_{n,n} )= R\sqcup B$ with exactly $\bbal (n, G)$ edges in one color contains no balanced copy of $G$ if and only if the graph induced by the red edges or by the blue edges is isomorphic to some $H\in \Bbal (n,G)$.
\end{definition}

Clearly, $\bbal(n,G) \le \bo(n,G)$ for any bipartite omnitonal graph $G$. Hence, the following corollary follows directly from \Cref{trees-b-omnit}.

\begin{corollary}
Every tree $T$ is bipartite balanceable.
\end{corollary}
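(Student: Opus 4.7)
The plan is to observe that bipartite balanceability is a strictly weaker property than bipartite omnitonality, so the corollary will reduce immediately to \Cref{trees-b-omnit}. Concretely, a balanced copy of $G$ is just an $(r,b)$-colored copy with $|r-b|\le 1$; i.e., one can take $r = \lfloor e(G)/2 \rfloor$ and $b = \lceil e(G)/2 \rceil$ (or vice versa). Hence any $2$-edge coloring of $K_{n,n}$ which is guaranteed to produce copies of $G$ in \emph{every} tonal variation certainly produces one in the particular balanced ratio, which gives the inequality $\bbal(n,G) \le \bo(n,G)$ already noted before the statement.

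With this in hand, the proof is essentially one line. First I would invoke \Cref{trees-b-omnit}, which guarantees that every tree $T$ is bipartite omnitonal, so that $\bo(n,T)$ exists for all sufficiently large $n$. Then I would apply the inequality $\bbal(n,T)\le \bo(n,T)$ to conclude that $\bbal(n,T)$ also exists and is bounded by $\bo(n,T)$, so $T$ is bipartite balanceable by definition.

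There is no real obstacle here: the content sits entirely in \Cref{trees-b-omnit} (which in turn rests on the characterization in \Cref{thm-carac-b-omnitonal} and the inductive leaf-removal argument). The only thing worth being careful about is to verify that a balanced copy is indeed an instance of an $(r,e(T)-r)$-colored copy for a legitimate value of $r$ in the range $0\le r\le e(T)$; this is immediate since $\lfloor e(T)/2\rfloor$ lies in that range.
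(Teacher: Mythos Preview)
Your proposal is correct and follows exactly the paper's approach: the paper also derives the corollary directly from \Cref{trees-b-omnit} via the observation $\bbal(n,G)\le \bo(n,G)$, which it states just before the corollary.
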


In the following, we determine the value of $\bbal(n, G)$ for paths and stars. 

\begin{theorem}\label{thm-bbal-Pk}
Let $n \ge 2$ and  $k\geq2$ be integers, where $k$ is even. If $n> \frac{k-2}{2}\left(\frac{k}{2}-\left\lfloor\frac{k-2}{4}\right\rfloor\right)$, then
\[\bbal(n,P_{k} )=\bbal(n,P_{k+1} ) = \left\lfloor\frac{k-2}{4}\right\rfloor n + 0^{\alpha},\]
where $\alpha \in \{0,2\}$ is such that $k\equiv \alpha \pmod{4}$. Moreover, if $s=\left\lfloor\frac{k-2}{4}\right\rfloor$, then $\Bbal(n, P_k )= \Bbal(n, P_{k+1} )=\{K_{s,n}\}$, if $k\equiv 2\pmod{4}$, and  $\Bbal(n, P_k )=\Bbal(n, P_{k+1} )= \{H_{s,n}\}$, if $k\equiv 0\pmod{4}$, where $H_{s,n}$ is isomorphic to the graph $K_{s,n}$ with one extra pendant vertex adjacent to a vertex of the partite set with $n$ vertices.
\end{theorem}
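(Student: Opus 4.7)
My plan is to split the argument along the two cases $k\equiv 0\pmod 4$ and $k\equiv 2\pmod 4$ and, within each, produce a matching lower and upper bound on $\bbal(n,P_k)$, writing $s=\lfloor(k-2)/4\rfloor$. For the lower bound I exhibit the coloring in which the blue edges induce the advertised extremal graph: $K_{s,n}$ when $k\equiv 2\pmod 4$ and $H_{s,n}$ when $k\equiv 0\pmod 4$, embedded in $K_{n,n}$ with the ``$s$-side'' inside one part, say $A$. Since that side is independent in $K_{n,n}$, in any path $P\subseteq K_{n,n}$ the vertices of the $s$-side lying on $P$ are pairwise non-adjacent and so each of them contributes at most $2$ blue edges (its $P$-degree), while the pendant in $H_{s,n}$ contributes at most $1$. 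A direct count bounds the number of blue edges in any $P$ by $2s$ (respectively $2s+1$), which in both cases equals $k/2-1$, strictly less than the $k/2$ edges of one colour required for a balanced copy of $P_k$ or of $P_{k+1}$. Hence no balanced copy occurs in this coloring.

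For the upper bound, suppose $E(K_{n,n})=R\sqcup B$ with $\min(|R|,|B|)>\bbal(n,P_k)$. The key tool is the bipartite analogue of the Erd\H{o}s--Gallai theorem for paths, which asserts that a subgraph of $K_{n,n}$ with more than $tn$ (respectively $tn+1$) edges must contain $P_{2t+1}$ (respectively $P_{2t+2}$), the extremal graphs being exactly $K_{t,n}$ and $H_{t,n}$ for large $n$. Applied to the minority colour (say $B$), this yields a blue path of the length needed to serve as the minority half of a balanced $P_k$ or $P_{k+1}$. Since $|R|+|B|=n^2$, the majority colour (WLOG red) has at least $n^2/2$ edges, far exceeding the Tur\'an threshold whenever $n$ beats the stated quadratic-in-$k$ bound, so the red graph contains paths of several convenient lengths. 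I then stitch a blue path $Q_B$ and a red path $Q_R$ at a common endpoint, with the remaining vertices of $Q_R$ disjoint from the interior of $Q_B$, producing a path of the correct total length and balanced split; taking $Q_R$ once of length $e(Q_B)$ and once of length $e(Q_B)+1$ delivers both a balanced $P_k$ and a balanced $P_{k+1}$.

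The extremal characterization comes out of the boundary case: if $|B|=\bbal(n,P_k)$ and the coloring still avoids a balanced copy, then the upper-bound argument must be on the verge of failing, which by the uniqueness clause in the bipartite Erd\H{o}s--Gallai statement forces $B\cong K_{s,n}$ or $B\cong H_{s,n}$, matching the descriptions of $\Bbal(n,P_k)=\Bbal(n,P_{k+1})$. In my view, the main obstacle is the stitching step: one needs a red path of a prescribed length, starting at a prescribed endpoint of $Q_B$ and avoiding its interior. I would handle this by first choosing $Q_B$ so that one endpoint has large red degree (via an averaging argument over the candidate blue paths produced by the Erd\H{o}s--Gallai proof) and then extending greedily inside the dense red subgraph on the vertex set $V(K_{n,n})\setminus(\text{interior of }Q_B)$; the hypothesis $n>\frac{k-2}{2}\bigl(\frac{k}{2}-s\bigr)$ is precisely the counting slack one needs to ensure that this greedy extension terminates successfully on both sides of the bipartition.
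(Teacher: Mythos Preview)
Your lower-bound argument is fine and matches the paper's: embedding $K_{s,n}$ (or $H_{s,n}$) with its $s$-side in one part of $K_{n,n}$ forces any path to pick up at most $2s$ (respectively $2s+1$) $=k/2-1$ edges of the minority colour, so no balanced $P_k$ or $P_{k+1}$ exists.

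The upper bound and the extremal characterization, however, have a genuine gap. Your plan hinges on a ``bipartite Erd\H{o}s--Gallai theorem'' asserting that a subgraph of $K_{n,n}$ with more than $tn$ edges contains $P_{2t+1}$, \emph{with $K_{t,n}$ the unique extremal graph}. The uniqueness is simply false: when $t\mid n$, a vertex-disjoint union of $n/t$ copies of $K_{t,t}$ also has $tn$ edges and contains no $P_{2t}$, hence no $P_{2t+1}$; more generally any disjoint union of $K_{t,m_i}$'s with $\sum m_i=n$ works. Consequently, your deduction ``$|B|=\bbal(n,P_k)$, no balanced $P_k$ $\Rightarrow$ blue has no $P_{k/2}$ $\Rightarrow$ blue $\cong K_{s,n}$'' breaks at the last arrow. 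Worse, it breaks earlier too: in the disjoint-$K_{s,s}$'s coloring the blue graph has no $P_{k/2}$, yet a balanced $P_k$ \emph{does} exist (one alternates a long blue segment inside one block with red edges between blocks), so ``no blue $P_{k/2}$'' does not imply ``no balanced $P_k$''. Your method only detects balanced paths of the special segregated form ``blue segment followed by red segment'', and the failure to find such a path says nothing about balanced paths with interleaved colours. This is exactly what is needed to pin down $\Bbal(n,P_k)$, and your argument does not supply it. The stitching step is also left at the level of a heuristic; even granting the density of red, you need to show that \emph{some} blue $P_{k/2}$ has an endpoint from which a red $P_{k/2}$ can be grown avoiding the blue interior, and the averaging/greedy sketch you give does not establish this.

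By contrast, the paper's proof does not pass through any Erd\H{o}s--Gallai-type statement at all. It proceeds by induction on $k$: assuming a balanced $P_{k-2}$ exists, it analyses in detail how the two extra edges can fail to be appended, deriving structural constraints on the colouring (all edges leaving the path at its ends share a colour, red edges inside the path must cluster at a small set $V_P^r\cup W_P^r$ of size exactly $\lfloor(k-2)/4\rfloor$, etc.) that ultimately force one colour class to be precisely $K_{s,n}$ or $H_{s,n}$. This case analysis is what simultaneously yields the tight upper bound and the uniqueness of the extremal colouring; your shortcut via path-extremal graphs cannot replace it.
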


\begin{proof}
Let $n \ge 2$ and  $k\geq2$ be integers, where $k$ is even, and $n> \frac{k-2}{2}\left(\frac{k}{2}-\floor*{\frac{k-2}{4}}\right)$. Let $p(n,k)=\floor*{\frac{k-2}{4}}n+0^{\alpha}$ where $\alpha \in \{0,2\}$ is such that $k\equiv \alpha \pmod{4}$. We determine the bipartite-balancing number of paths with an even number of edges. Once this is determined, the odd case is straightforward, and it is discussed at the end of the proof. We begin by proving that a $2$-edge coloring $E(K_{n,n})=R\sqcup B$ with more than $p(n,k)$ edges in each color is satisfiable. We proceed to prove that

$$e(K_{n,n})=n^2 > 2p(n,k)$$

for all $n> \frac{k-2}{2}\left(\frac{k}{2}-\floor*{\frac{k-2}{4}}\right)$. If $k\equiv 0\pmod{4}$, then $2p(n,k)=2(\floor*{\frac{k-2}{4}}n+0^{\alpha})=2(\frac{k-4}{4}n+1)<n^2$, because $n>\frac{k-4}{2}+2$. If $k\equiv 2\pmod{4}$, then $2p(n,k)=2(\floor*{\frac{k-2}{4}}n+0^{\alpha})=2(\frac{k-2}{4})n<n^2$, because $n>\frac{k-2}{2}$.  

We proceed now to prove, by induction on $k$, that any $2$-coloring of $K_{n,n}$ with at least $\p(n,k)$ edges in each color has either a balanced $P_k$ or the edges in one of the colors induce the graph $B(k)$, where $B(k)= H_{s,n}$, if $\equiv 0\pmod{4}$, or $B(k)=  K_{s,n}$, if $k\equiv 2\pmod{4}$. We assume there is no balanced $P_k$ in order to arrive to one of these extremal colorings, each of which satisfies that every copy of $P_k$ has at most $\frac{k}{2}-1$ edges in one color class.

When $k=2$, any $2$-edge coloring of $K_{n,n}$ with at least one edge in each color is enough to find a balanced $P_2$, hence $\bbal(n,P_2)=0$, while $K_{0,n}$ represents the extremal coloring which in this case is the empty graph. Let $k > 2$ be even, and assume the statement of the theorem is true for each even integer $k'$ with $2 \le k' < k$.

Suppose that $E(K_{n,n})=R\sqcup B$ is a $2$-edge coloring with at least $\p(n,k)$ edges in each color. It is a simple matter to check that the conditions for $k' = k-2$ are satisfied. If $k\equiv 0\pmod{4}$ then $\p(n,k)=\lfloor \frac{k-2}{4} \rfloor n+1=\frac{k-4}{4}n+1=\lfloor \frac{k-4}{4} \rfloor n+1=\p(n, k')$, and if $k\equiv 2\pmod{4}$ then $\p(n,k)=\lfloor \frac{k-2}{4} \rfloor n=\frac{k-2}{4}n>\lfloor \frac{k-4}{4} \rfloor n=\p(n,k')$. We can assume that, by the induction hypothesis, there is a balanced copy of $P_{k-2}$, say $P=v_1 v_2 \cdots v_{k-1}$. Let $K_{n,n}$ have partite sets $V$ and $W$, where $V_P = V\cap V(P)$ and $W_P = W \cap V(P)$. We will show that either there is a balanced $P_k$ or we will have one of two extremal colorings depending on the value of $k$ modulo $4$.

We begin by analyzing the structure of the coloring outside of $P$. Note first that all of the edges that come out of $V(P)$ with an endpoint in $\{v_1, v_{k-1}\}$ must have the same color. Otherwise, if $v_1v$ is red and $v_{k-1}u$ is blue with $u,v$ being distinct vertices outside of $V(P)$, then $v v_1Pv_{k-1} u$ forms a balanced $P_k$. We assume, without loss of generality, that all such edges are blue, that is 

 \begin{equation}\label{start_end_vertices}
    E(\{v_1, v_{k-1}\}, (V \cup W)\setminus V(P)) \subseteq B.
\end{equation}

By a similar argument, we can conclude that 
\begin{equation}\label{edges_outside}
    E(V\setminus V_P, W\setminus W_P)\subseteq B.
\end{equation}

Otherwise, if $xy\in E(V\setminus V_P , W\setminus W_P)\cap R$, then $xyv_1Pv_{k-1}$ makes a balanced $P_k$.

\begin{claim}\label{claim-0}
Any edge with one endpoint outside of $V(P)$ that is incident to a blue edge in $P$, must be blue. 
\end{claim}

\claimproof{claim-0}
We will show now that every time there is a blue edge in $P$, no red edges can come out of it. If $x\in V_P$, $y\in W_P$, $w\in W\setminus W_P$, $xy\in E(P)\cap B$ and $vy\in R$ for some $v\in V\setminus V_P$, then $xPv_1wvyPv_{k-1}$ makes a balanced $P_k$ (recall that $v_1 w$ and $wv$ are blue). If now we take $x\in W_P$, $y\in V_P$, $xy\in E(P)\cap B$, $wy\in R$ for some $w\in W\setminus W_P$ and some $w'\in W\setminus W_P$ with $w'\neq w$, then $xPv_1wyPv_{k-1}w'$ forms a balanced $P_k$. \claimqed\\

Let $V_P ^r$ be the set of vertices in $V_P$ that make at least one red edge with a vertex in $W\setminus W_P$, and let $W_P ^r$ be the set of vertices in $W_P$ that make at least one red edge with a vertex in $V\setminus V_P$. Therefore, by \Cref{claim-0} all vertices in $V_P^r \cup W_P^r$ are incident to two red edges from $P$.
By definition, 

\begin{equation}\label{eq-00}
e_R (v,W\setminus W_P)\neq 0, \hspace{3pt}
e_R (u,V\setminus V_P)\neq 0 
\end{equation}

for all $v\in V_P^r$ and for all $u\in W_P^r$.

Note that every blue edge in $P$ has one endpoint in $V_P\setminus V_P^r$ and  the other endpoint in $W_P\setminus W_P^r$. Also every vertex in $V_P\setminus V_P^r$ and $W_P\setminus W_P^r$ is either incident to some blue edge in $P$, or it is incident to two red edges in $P$ and makes only blue edges with $(V\setminus V_P)\cup (W\setminus W_P)$ (otherwise, it would be in $V_P^r\cup W_P^r$). Along with \Cref{claim-0}, we can conclude that all vertices in $V_P^r \cup W_P^r$ are incident to two red edges from $P$, and

\begin{equation}\label{eq-2}
E(V_P \setminus V_P ^r , W\setminus W_P) \cup E(W_P \setminus W_P ^r , V\setminus V_P)\subseteq B.
\end{equation}

\begin{claim}\label{claim-1}
All red edges not in $E(P)$ incident to a red edge $e$ in $P$ must be incident to the same endpoint in $e$.  Therefore, every red edge in $E(P)$ has an endpoint with only blue incident edges.
\end{claim}

\claimproof{claim-1}
If $xy\in E(P)\cap R$ and there are $u,v\in (V\cup W)\setminus V(P)$ such that $ux, vy\in R$, then $v_1PxuvyPv_{k-1}$ forms a balanced $P_k$ since we already know that $uv\in B$. \claimqed\\

Notice that $|V_P ^r\cup W_P^r|\leq \lfloor \frac{k-2}{4} \rfloor$ as from each vertex in $V_P ^r\cup W_P^r$ there are two red edges from $P$ and $P$ has $\frac{k-2}{2}$ red edges. On the other hand, if $|V_P^r \cup W_P^r|<\lfloor \frac{k-2}{4} \rfloor$, say $|V_P^r\cup W_P^r|= |V_P^r|+|W_P^r|=\lfloor \frac{k-2}{4} \rfloor -q$ for some $1\leq q\leq \floor*{\frac{k-2}{4}}$, then the number of red edges would not satisfy the hypothesis as we can see in the next bound that employs previous coloring observations.

\begin{align*}
|R|&= e_R (V_P^r, W\setminus W_P) + e_R (W_P^r, V \setminus V_P) + e_R (V_P , W_P)\\
&\leq |V_P^r|\cdot |W\setminus W_P|+|W_P^r|\cdot |V\setminus V_P|+|V_P|\cdot|W_P|\\
&= |V_P^r|\cdot \left(n-|W_P|\right)+\left(\floor*{\frac{k-2}{4}} -|V_P^r| -q\right)\left(n-|V_P|\right)+|V_P|\cdot |W_P|\\
&\leq |V_P^r|\cdot \left(n-|W_P|\right)+\left(\floor*{\frac{k-2}{4}} -|V_P^r| -q\right)\left(n-|W_P|\right)+|V_P|\cdot |W_P|\\
&\leq \left(n-|W_P|\right)\left(|V_P^r|+ \floor*{\frac{k-2}{4}}-|V_P^r| -q\right) +  \frac{k}{2} \cdot \frac{k-2}{2} \\
&< n\left(\floor*{\frac{k-2}{4}}-q\right) + \frac{k^2}{4}\\
&\leq n\left(\floor*{\frac{k-2}{4}}-1\right) +\frac{k^2}{4}
\end{align*}

considering Equations (\ref{edges_outside}) and (\ref{eq-2}). However, this contradicts the fact that $|R|\geq \lfloor \frac{k-2}{4} \rfloor n$ due to the fact that $n> \frac{k-2}{2}\left(\frac{k}{2}-\floor*{\frac{k-2}{4}}\right)$ when $k\geq 4$.

Therefore,
\begin{equation}\label{eq-VPr}
|V_P^r\cup W_P^r|= \floor*{\frac{k-2}{4}}.
\end{equation}

We can conclude that exactly $2\floor*{\frac{k-2}{4}}$ red edges in $P$ are incident to vertices from $V_P^r\cup W_P^r$. Therefore, we arrive to the following claim. 

\begin{claim}\label{claim-11}
Every red edge in $P$ is incident to some vertex in $V_P^r\cup W_P^r$, except for one edge with endpoints in $V_P\setminus V_P^r$ and $W_P\setminus W_P^r$ in the case where $k\equiv 0\pmod{4}$. 
\end{claim}

Now we can analyze the structure of the coloring within $P$ to prove the next inclusion.

\begin{equation}\label{claim-3}
    E(V_P \setminus V_P^r, W_P \setminus W_P^r) \setminus E(P) \subseteq B.
\end{equation}

\textit{Proof of \Cref{claim-3}}
By \Cref{claim-11}, recall that every vertex $v$ in $(V_P\setminus V_P^r)\cup (W_P\setminus W_P^r)$ is either incident to some blue edge in $P$ or it is incident to two red edges in $P$. Additionally, $v$ makes only blue edges with $(V\setminus V_P)\cup (W\setminus W_P)$ by \Cref{eq-2}. Let $v_i,v_j \in (V_P \cup W_P)\setminus (V_P^r \cup W_P^r)$ such that $i<j$. We go over all possible cases assuming $v_iv_j \in R$.  
In each case, we construct a balanced $P_k$.\\

\noindent\textbf{Case 1:} Let $v_iv_{i+1}, v_j v_{j+1}\in B$ with $v_i\in V$ and $v_j\in W$. Let $x,w\in V\setminus V_P$ and $y\in W\setminus W_P$. If $v_1,v_{k-1}\in V_P$, the path $v_1Pv_iv_jPv_{i+1}xyv_{k-1}Pv_{j+1}$ makes a balanced $P_k$. If $v_1,v_{k-1}\in W_P$, the path $wv_1Pv_iv_jPv_{i+1}xv_{k-1}Pv_{j+1}$ makes a balanced $P_k$.

 This case is analogous by symmetry to the case where $v_{i-1}v_{i}, v_{j-1} v_{j}\in B$ with $v_i\in W$ and $v_j\in V$.\\

\noindent\textbf{Case 2:} Let $v_{i-1}v_{i}, v_{j-1} v_{j}\in B$ with $v_i\in V$ and $v_j\in W$. Let $x,y\in W\setminus W_P$ and $w\in V\setminus V_P$. If $v_1,v_{k-1}\in V_P$, the path $v_{i-1}Pv_1xv_{j-1}Pv_iv_jPv_{k-1}y$ makes a balanced $P_k$. If $v_1,v_{k-1}\in W_P$, the path $v_{i-1}Pv_1wyv_{j-1}Pv_iv_jPv_{k-1}$ makes a balanced $P_k$. This case is analogous by symmetry to the case where $v_{i}v_{i+1}, v_{j} v_{j+1}\in B$ with $v_i\in W$ and $v_j\in V$.\\

\noindent\textbf{Case 3:} Let $v_{i}v_{i+1},v_{j-1}v_j \in B$. Let $x,w\in V\setminus V_P$ and $y\in W\setminus W_P$. We can assume that  $v_{i-1}v_i, v_j v_{j+1}\in R$, otherwise, see Cases 1 and 2. Without loss of generality we can assume that, $v_i\in V$ and $v_j \in W$. If $v_1,v_{k-1}\in V_P$, the path $v_1Pv_iv_jPv_{k-1}yxv_{j-1}Pv_{i+1}$ makes a balanced $P_k$. If $v_1,v_{k-1}\in W_P$, the path $wv_1Pv_iv_jPv_{k-1}xv_{j-1}Pv_{i+1}$ makes a balanced $P_k$.\\

\noindent\textbf{Case 4:} Let $v_{i-1}v_{i}, v_{j} v_{j+1}\in B$. We can assume that $v_iv_{i+1},v_{j-1}v_j\in R$, otherwise, see Cases 1 or 2. Without loss of generality, we can assume that $v_i\in V$ and $v_j\in W$. We can suppose $v_{i-1}v_{j+1}\in B$, otherwise, see Case 3. Suppose there is an edge $v_tv_{t+1}\in B$ with $i<t<j$. Let $x,y\in W\setminus W_P$ such that $x\neq y$ and $w\in V\setminus V_P$. 

If $v_t \in V_P$ and $v_1,v_{k-1}\in V_P$, the path $v_{t+1}Pv_jv_iPv_txv_1 P v_{i-1}v_{j+1}Pv_{k-1}y$ makes a balanced $P_k$. If $v_t \in V_P$ and $v_1,v_{k-1}\in W_P$, the path $v_{t+1}Pv_jv_iPv_txwv_1 P v_{i-1}v_{j+1}Pv_{k-1}$ makes a balanced $P_k$.

If $v_t \in W_P$ and $v_1,v_{k-1}\in W_P$, the path $v_1Pv_{i-1}v_{j+1}Pv_{k-1}wxv_{t+1}Pv_jv_iPv_t$ makes a balanced $P_k$. Therefore, all edges in $P$ between $v_i$ and $v_j$ must be red. Because there are no odd cycles in $K_{n,n}$, there are at least two vertices in $V(P)$ between $v_{i}$ and $v_j$. By \Cref{claim-11}, $v_{j-1}\in V_P^r$ or $v_{i+1}\in W_P^r$. Suppose $v_{j-1}\in V_P^r$. There are $x,y\in W\setminus W_P$, with $x\neq y$, and such that $v_{j-1} x \in R$. If $v_1,v_{k-1}\in V_P$, the path $v_{i-1}Pv_1xv_{j-1}Pv_{i}v_j P v_{k-1}y$ makes a balanced $P_k$ (recall $v_1x,v_{k-1}y\in B$). If $v_1,v_{k-1}\in W_P$, the path $v_{i-1}Pv_1wxv_{j-1}Pv_{i}v_j P v_{k-1}$ makes a balanced $P_k$. The case when $v_{i+1}\in W_P^r$ is analogous by symmetry.\\

\noindent\textbf{Case 5:} Let $v_i\in V_P\setminus V_P^r$, $v_j \in W_P\setminus W_P^r$. We consider four subcases. We prove each subcase assuming $v_{i-1}\in W_P\setminus W_P^r$ and $v_{i-1}v_i\in B$ or $v_{i+1}\in W_P\setminus W_P^r$ and $v_iv_{i+1}\in B$, when possible.

\begin{itemize} 
\item \emph{Subcase 5.1: Let $v_{j-1}\in V_P^r$ and $v_{j+1}\in V_P\setminus V_P^r$.} If $v_{i-1}v_i \in B$ for $i\geq 2$, then $v_1Pv_{i-1}xyv_{j-1}Pv_i v_j P v_{k-1}$ makes a balanced $P_k$ for $x\in V\setminus V_P$, and $y\in W\setminus W_P$ such that $v_{j-1}y\in R$.

If $v_i v_{i+1}\in B$ for $i\geq 1$ and $v_1,v_{k-1}\in V_P$, then $xv_1Pv_iv_jPv_{k-1}yv_{j-1}Pv_{i+1}$ makes a balanced $P_k$ for $x,y\in W\setminus W_P$ and $v_{j-1}y\in R$. If $v_1,v_{k-1}\in W_P$, then $v_1Pv_iv_jPv_{k-1}yxv_{j-1}Pv_{i+1}$ is a balanced $P_k$ for $x\in W\setminus W_P$ such that $v_{j-1}x\in R$, and $y\in V\setminus V_P$. 

\item \emph{Subcase 5.2: Let $v_{j-1}\in V_P\setminus V_P^r$ and $v_{j+1}\in V_P^r$.} This implies that $v_{j-2}v_{j-1}\in B$. If $v_{i-1}v_i \in B$ for $i\geq 2$, then $v_1Pv_{i-1}xv_{j-2}Pv_iv_jv_{j-1}yv_{j+1}Pv_{k-1}$ is a balanced $P_k$ for $x\in V\setminus V_P$ and $y\in W\setminus W_P$ such that $v_{j+1}y\in R$.

If $v_iv_{i+1}\in B$ for $i\geq 1$, then $v_1Pv_iv_jv_{j-1}Pv_{i+1}xyv_{j+1}Pv_{k-1}$ is a balanced $P_k$ for $x\in V\setminus V_P$ such that $v_{j-1}x\in R$ and $y\in W\setminus W_P$.

\item \emph{Subcase 5.3: Let $j=k-1$ and $v_{j-1}=v_{k-2}\in V_P^r$.} If $v_{i-1}v_i\in B$ for $i\geq 2$ and $v_1,v_{k-1}\in W_P$, then $xv_1Pv_{i-1}yv_{j-1}Pv_iv_j$ is a balanced $P_k$ for $x\in V\setminus V_P$ and $y\in W\setminus W_P$ such that $v_{j-1}y\in R$. If $v_1,v_{k-1}\in V_P$, then $v_1Pv_{i-1}wyv_{j-1}Pv_iv_j$ is a balanced $P_k$ for $w\in W\setminus W_P$ and $y\in V\setminus V_P$ such that $v_{j-1}y\in R$.

If $v_iv_{i+1}\in B$, for $i\geq 1$ and $v_1,v_{k-1}\in W_P$, then $v_1Pv_iv_j y x v_{j-1}Pv_{i+1}$ is a balanced $P_k$, for $x\in W\setminus W_P$ such that $v_{j-1}x\in R$, and $y\in V\setminus V_P$. If $v_1,v_{k-1}\in V_P$, then $wv_1Pv_iv_j x v_{j-1}Pv_{i+1}$ is a balanced $P_k$ where $w\in V\setminus V_P$.

\item \emph{Subcase 5.4: Let $i=1$, $v_1v_2\in R$, and $v_{j-1}\in V_P^r$.} There are vertices $x\in W\setminus W_P$ and $y\in (V\cup W)\setminus V(P)$ such that $xv_{j-1}\in R$ and $v_{k-1}y\in B$ by \Cref{start_end_vertices}. The path $xv_{j-1}Pv_1v_jPv_{k-1}y$ makes a balanced $P_k$.

There are vertices $x,y\in W\setminus W_P$ such that $xv_{j-2}\in B$ and $v_{j+1}y\in R$ by \Cref{eq-2}. The path $v_{j-1}v_jv_1Pv_{j-2}xyv_{j+1}Pv_{k-1}$ makes a balanced $P_k$. 
\end{itemize}

\claimqed\\

By the previous observations, note that the edges in $E(V\setminus V_P^r,W\setminus W_P^r)$ are all blue if $k\equiv 2\pmod{4}$. If $k\equiv 0\pmod{4}$, the same holds except for one red edge which is in $E(V_P \setminus V_P^r,W_P \setminus W_P^r)$. Also note that, in both cases $E(V_P^r,W_P^r)\subseteq B$. We proceed to prove that one of the sets, $V_P^r$ or $W_P^r$, must be empty. Recall that, 
$|V_P^r|+|W_P^r|=\lfloor \frac{k-2}{4}\rfloor$. 
Without loss of generality suppose that $|V_P^r|\geq |W_P^r|$. If $|W_P^r|\geq 1$, the number of red edges is bounded as follows.
\begin{align*}
    |R|&=e_R(V_P^r,W)+e_R(W_P^r,V)\\ 
    &\leq |V_P^r|(n-|W_P^r|)+|W_P^r|(n-|V_P^r|)\\ 
    &\leq |V_P^r|(n-|W_P^r|)+|W_P^r|(n-|W_P^r|)\\
    &=(|V_P^r|+|W_P^r|)(n-|W_P^r|)\\ 
    &\leq \left\lfloor \frac{k-2}{4} \right\rfloor (n-1)
\end{align*}

This contradicts the fact that $|R|\geq \lfloor \frac{k-2}{4}\rfloor n$. Therefore,  $W_P^r$ must be empty.
Thus, we can conclude that 

\begin{enumerate}
\item[i)] If $k\equiv 0 \pmod{4}$ then the red edges form a $K_{s,n}$ with $s=\lfloor \frac{k-2}{4} \rfloor$ and an extra pendant vertex adjacent to a vertex of the partite set with $n$ vertices. 

\item[ii)] If $k\equiv 2 \pmod{4}$ then the red edges form a $K_{s,n}$, with $s=\lfloor \frac{k-2}{4} \rfloor$.
\end{enumerate}

Observe that $\bbal(n,P_{k+1} )=\bbal(n,P_k )$ for $k\geq 2$ even. This holds in any $2$-edge coloring of $E(K_{n,n})$ where there is a balanced copy of $P_k$, because every balanced $P_k$ can be extended to a balanced $P_{k+1}$ by considering one more edge adjacent to one of its end vertices. Regardless of its color, the extended path $P_{k+1}$ has $\floor*{\frac{k}{2}}$ edges in one color and $\ceil*{\frac{k}{2}}$ in the other. The fact that $\Bbal(n,P_k )=\Bbal(n,P_{k+1} )$ holds, because the extremal coloring associated to $\Bbal(n,P_k )$ also avoids copies of balanced $P_{k+1}$. Hence, the bipartite-balancing number of paths of odd length is also determined. This concludes the proof.

\end{proof}

We now state the exact bipartite balancing number for stars. 

\begin{theorem} Let $n$ and $k$ be integers with $k$ even, $k\geq 2$ and $n\geq \frac{k^2}{2}+k-2$, then
\[\bbal(n,K_{1,k})=\bbal(n,K_{1,k+1})=(k-2)\left(n-\frac{k-2}{4}\right),\]
and $\Bbal(n, K_{1,k})=\Bbal(n, K_{1,k+1})$ contains only one graph which is a bipartite graph $H$ with partition sets $A$ and $B$ where $A=A_1 \sqcup A_2$ and $B=B_1 \sqcup B_2$ with $|A_1|=|B_2|=n-\frac{k-2}{2}$ and $|A_2|=|B_1|=\frac{k-2}{2}$. The edges of $H$ are all possible edges between $A_1$ and $B_1$ and between $A_2$ and $B$.
\end{theorem}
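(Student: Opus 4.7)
The plan is to handle the lower bound by the explicit construction of the statement and the upper bound by a structural argument on high-degree vertices, with the $K_{1,k+1}$ case reduced to the $K_{1,k}$ case at the end. For the lower bound, I color the edges of the graph $H$ red and the remaining edges of $K_{n,n}$ blue. A direct count gives $|R| = \frac{k-2}{2} \cdot n + \frac{k-2}{2}\bigl(n - \frac{k-2}{2}\bigr) = (k-2)\bigl(n - \frac{k-2}{4}\bigr)$, and the hypothesis $n \geq k^2/2 + k - 2$ easily gives $|R| \leq n^2/2$, so $|B| \geq |R|$. No vertex is the center of a balanced $K_{1,k}$: every vertex of $A_1 \cup B_2$ has red degree exactly $\frac{k-2}{2} < \frac{k}{2}$, and every vertex of $A_2 \cup B_1$ has blue degree $0$.

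For the upper bound, suppose $E(K_{n,n}) = R \sqcup B$ with $\min\{|R|,|B|\} > (k-2)(n - \frac{k-2}{4})$ and, for contradiction, that no balanced $K_{1,k}$ exists. Since a vertex $v$ is the center of a balanced $K_{1,k}$ if and only if $\deg_R(v) \geq \frac{k}{2}$ and $\deg_B(v) \geq \frac{k}{2}$, every vertex must satisfy $\deg_R(v) \leq \frac{k-2}{2}$ or $\deg_R(v) \geq n - \frac{k-2}{2}$. Partition $A = A_r \sqcup A_b$ and $B = B_r \sqcup B_b$ accordingly, with $A_r, B_r$ consisting of the red-heavy vertices, and set $a = |A_r|$, $b = |B_r|$. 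Without loss of generality $|R| \leq |B|$, so $|R| \leq n^2/2$.

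The key step is a double count of $e_R(A_r, B_b)$. From $A_r$'s side, the red edges leaving $A_r$ number at least $a(n - \frac{k-2}{2})$, and at most $ab$ of them can land in $B_r$; from $B_b$'s side, the red edges incident to $B_b$ number at most $(n-b)\frac{k-2}{2}$. The resulting inequality $a(n-b) \leq \frac{k-2}{2}(a + n - b)$, applied with $a - \frac{k-2}{2} \geq 1$, forces $b \geq n - \frac{k(k-2)}{4}$, and symmetrically $b > \frac{k-2}{2}$ forces $a \geq n - \frac{k(k-2)}{4}$. Under the hypothesis $n \geq k^2/2 + k - 2$, any configuration in which $a$ or $b$ exceeds $\frac{k-2}{2}$ would then give $|R| \geq (n - \frac{k(k-2)}{4})(n - \frac{k-2}{2}) > n^2/2$, a contradiction. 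Therefore $a, b \leq \frac{k-2}{2}$, and then $|R| \leq bn + (n-b)\frac{k-2}{2}$, which as a function of $b \in [0, \frac{k-2}{2}]$ is maximized at $b = \frac{k-2}{2}$ with value $(k-2)(n - \frac{k-2}{4})$, contradicting $|R| > (k-2)(n - \frac{k-2}{4})$.

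For uniqueness of the extremal coloring and for the $K_{1,k+1}$ statement, I trace equality through the previous chain. An extremal coloring with $|R| = (k-2)(n - \frac{k-2}{4})$ forces $a = b = \frac{k-2}{2}$, every vertex of $A_r \cup B_r$ to have red degree $n$, and every vertex of $A_b \cup B_b$ to have red degree $\frac{k-2}{2}$, which pins the red-edge set down to $(A_r \times B) \cup (A_b \times B_r)$ --- precisely the graph $H$ of the statement under the identification $A_1 = A_b$, $A_2 = A_r$, $B_1 = B_r$, $B_2 = B_b$. For $K_{1,k+1}$, a balanced copy at a center $v$ requires $\min\{\deg_R(v), \deg_B(v)\} \geq \frac{k}{2}$, exactly as for balanced $K_{1,k}$; hence any balanced $K_{1,k}$ extends to a balanced $K_{1,k+1}$ by adjoining one further incident edge (giving $\bbal(n, K_{1,k+1}) \leq \bbal(n, K_{1,k})$), while in the extremal coloring for $K_{1,k}$ every vertex has $\min\{\deg_R(v), \deg_B(v)\} \in \{0, \frac{k-2}{2}\}$, so balanced $K_{1,k+1}$ is equally avoided, yielding the reverse inequality and the identical extremal family. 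The main obstacle is the structural case analysis in the third paragraph: isolating $a, b \leq \frac{k-2}{2}$ as the only feasible regime for $n \geq k^2/2 + k - 2$ requires carefully combining both partite sides' degree constraints via the double count, which is the crux of the argument.
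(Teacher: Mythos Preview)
Your proof is correct, and the overall architecture (lower bound via the explicit $H$, upper bound via the red-heavy/blue-heavy partition, equality tracing for uniqueness, and extension for $K_{1,k+1}$) matches the paper's. The one genuine difference is how you dispatch the structural case analysis in the upper bound.

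The paper splits into two cases: Case~1, where one of $|V_r|,|V_b|,|W_r|,|W_b|$ is at most $\frac{k}{2}-1$, and Case~2, where all four are at least $\frac{k}{2}$. Case~1 is handled by a direct edge count on one side, followed by a sub-case on $|W_r|$; Case~2 requires introducing the parameter $t=|V_r|-|W_r|$, bounding $t$, then analyzing the rational function $f(x)=\frac{x^2}{x-\frac{k}{2}+1}$ to squeeze out a contradiction with the lower bound on $n$. Your route replaces all of this with the single double count of $e_R(A_r,B_b)$, yielding $a(n-b)\le \frac{k-2}{2}(a+n-b)$; from this, $a>\frac{k-2}{2}$ forces $b\ge n-\frac{k(k-2)}{4}$, and the symmetric inequality then forces $a$ large as well, so $|R|\ge a\bigl(n-\frac{k-2}{2}\bigr)>n^2/2$. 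That collapses the paper's Case~2 and the sub-case $|W_r|\ge \frac{k}{2}$ of Case~1 into one stroke, leaving only the clean regime $a,b\le\frac{k-2}{2}$ in which the linear bound $|R|\le bn+(n-b)\frac{k-2}{2}$ finishes. What you gain is a shorter and more symmetric argument; what the paper's approach buys is that the function analysis in its Case~2 makes the role of the threshold $n\ge \frac{k^2}{2}+k-2$ somewhat more transparent, whereas in your version one has to check separately that $(n-\frac{k(k-2)}{4})(n-\frac{k-2}{2})>n^2/2$ under that hypothesis (which it does, since $n>\frac{k^2-4}{2}$ suffices).

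Two minor points worth tightening in a final write-up: in the uniqueness step, you should note explicitly that $|R|=(k-2)(n-\frac{k-2}{4})\le n^2/2$ under the hypothesis on $n$, so that the case $a>\frac{k-2}{2}$ is again excluded and you land in $a,b\le\frac{k-2}{2}$; and the claim that the degree conditions ``pin the red-edge set down'' to $(A_r\times B)\cup(A_b\times B_r)$ deserves one more sentence (each $v\in B_b$ already receives $\frac{k-2}{2}$ red edges from $A_r$, hence none from $A_b$, forcing all of $A_b\times B_r$ red).
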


\begin{proof}
Let $n$ and $k$ be integers with $k$ even, $k\geq 2$ and $n\geq \frac{k^2}{2}+k-2$. We determine the bipartite-balancing number for stars of even number of edges. We can easily extend this result for stars of odd number of edges. It is simple to see that $\bbal(n,K_{1,k})=\bbal(n,K_{1,k+1})$ when $k\geq 2$ is even. Note that, in every $2$-edge coloring of $K_{n,n}$ where there is a balanced $K_{1,k}$, we can extend this copy to a balanced $K_{1,k+1}$ by considering one more edge adjacent to the vertex of maximum degree. Regardless of its color, the extended star $K_{1,k+1}$ has $\floor*{\frac{k}{2}}$ edges in one color and $\ceil*{\frac{k}{2}}$ in the other. The fact that $\Bbal(n,K_{1,k})=\Bbal(n,K_{1,k+1})$ holds, because the extremal coloring associated to $\Bbal(n,K_{1,k})$ also avoids copies of balanced $K_{1,k+1}$.

Therefore, we prove the result for stars of even number of edges. Let $a(n,k):= (k-2)(n-\frac{k-2}{4}).$ We begin by proving that a $2$-edge coloring $E(K_{n,n})=R\sqcup B$ with more than $a(n,k)$ edges in each color is satisfiable. We proceed to prove that 
$$e(K_{n,n})=n^2 > 2a(n,k)$$ for all $n\geq  \frac{k^2}{2}+k-2$. 
Note that $n^2 > 2a(n,k)=2(k-2)(n-\frac{k-2}{4})$ is equivalent to $(n-k+2)^2-\frac{(k-2)^2} {2} > 0$ and this is true when $n> (k-2)(1+ \frac{1}{\sqrt{2}} )$ and $k\geq 4$, which holds by the hypothesis.\\

Let $H$ be the bipartite graph described in the theorem statement. Observe that $H$ has exactly $|A_1||B_1|+|A_2||B|= (n-\frac{k-2}{2})\frac{k-2}{2} + \frac{k-2}{2}n=(k-2)(n-\frac{k-2}{4})=a(n,k)$ edges.

Note that any $2$-edge coloring $E(K_{n,n})=R\sqcup B$, where the graph induced by one of the color classes is isomorphic to $H$, does not contain a balanced copy of $K_{1,k}$. If a vertex $v$ in this coloring has at least $\frac{k}{2}$ neighbors in one color, it will have less than $\frac{k}{2}$ neighbors in the other color making the existence of a balanced $K_{1,k}$ impossible. Hence, $\bbal(n,K_{1,k}) \geq a(n,k)$ and $H\in \Bbal(n,K_{1,k})$ is proved.

To prove the upper bound $\bbal(n,K_{1,k}) \leq a(n,k)$ and  $\Bbal(n,K_{1,k}) = \{H\}$, we must show that any $2$-edge coloring $E(K_{n,n})=R\sqcup B$, with at least $a(n,k)$ edges in each color and such that neither the red graph nor the blue graph is isomorphic to $H$, must contain a balanced copy of $K_{1,k}$. 

Let $V(K_{n,n})$ have the bipartition $V\sqcup W$. We define the following sets

$$V_r = \{ v\in V \mid \deg_R (v)\geq \frac{k}{2} \}, \hspace{30pt} V_b = \{ v\in V \mid \deg_B (v)\geq \frac{k}{2} \}.$$

$$W_r = \{ v\in W \mid \deg_R (v)\geq \frac{k}{2} \}, \hspace{30pt} W_b = \{ v\in W \mid \deg_B (v)\geq \frac{k}{2} \}.$$

Let $E(K_{n,n})=R\sqcup B$ be a $2$-edge coloring with at least $a(n,k)$ edges in each color and such that neither the red graph nor the blue graph is not isomorphic to $H$. If there is a vertex in $V_r \cap V_b$ or in $W_r \cap W_b$, then the graph induced by this vertex and its neighbors contains a balanced $K_{1,k}$. Therefore, we will assume that $V_r \cap V_b = \emptyset$ and $W_r \cap W_b = \emptyset$ and consequently that $V=V_r \sqcup V_b$ and $W=W_r \sqcup W_b$. This partition and the fact that any vertex $v\in V_R$ satisfies that $\deg_B (v) \leq \frac{k}{2}-1$ imply that $\deg_R (v)\geq n-\frac{k}{2}+1$ for all $v\in V_r$. Analogously, $\deg_B (v) \geq n-\frac{k}{2}+1$ for all $v\in V_b$, $\deg_R(w)\geq n-\frac{k}{2}+1$ for all $w\in W_r$, and $\deg_B (w)\geq n-\frac{k}{2}+1$ for all $w\in W_b$. We separate the proof in two cases.\\

\noindent\textbf{Case 1.} One of $\{V_r, V_b, W_r, W_b\}$ has at most $\frac{k}{2}-1$ elements. Without loss of generality, suppose $V_r$ to be such set, hence $|V_r|\leq \frac{k}{2}-1$. Now we bound the number of red edges.

\begin{align*}
a(k,n) \leq |R| & = e_R \left(V_r , W\right) + e_R (V_b, W)\\
& \leq |V_r|\cdot n + |V_b|(\frac{k}{2}-1)\\
& = |V_r|\cdot n + \left(n-|V_r|\right)\left(\frac{k}{2}-1\right)\\
& = n\cdot\left(\frac{k}{2}-1\right) + |V_r|\left(n-\frac{k}{2}+1\right)\\
& \leq n\cdot \left(\frac{k}{2}-1\right)+\left(\frac{k}{2}-1\right)\left(n-\frac{k}{2}+1\right)\\
& =a(k,n).
\end{align*}

Because the equality is reached, we can conclude the following:
\begin{itemize}
\item $\deg_R (v)=n$ for all $v\in V_r$, which implies that $\deg_B (v)=0$ for all $v\in V_r$;
\item $\deg_R(v)=\frac{k}{2}-1$ for all $v\in V_b$, which implies that $\deg_B (v)=n-\frac{k}{2}+1$ for all $v\in V_b$;
\item and $|V_r|=\frac{k}{2}-1$, which implies that $|V_b|=n-\frac{k}{2}+1$.
\end{itemize}

This also implies that every vertex in $W$ is connected to every vertex in $V_r$ by a red edge. Note that there cannot be any red edges connecting $V_b$ and $W_b$. Otherwise, if $vw$ is a red edge with $v\in V_b$ and $w\in W_b$ then the vertex $w$ would have more than $\frac{k}{2}-1$ red neighbors. Therefore, all edges connecting $V_b$ and $W_b$ must be blue. 

If $|W_r|\leq \frac{k}{2}-1$, then 

\begin{align*}
a(n,k) \leq |R| & = e_R (W_r , V) + e_R (W_b , V)\\
& \leq |W_r |\cdot n + \left(\frac{k}{2}-1\right)|W_b |\\
& = |W_r |\cdot n + \left(n- |W_r |\right)\left(\frac{k}{2}-1\right)\\
& = n \cdot \left(\frac{k}{2}-1\right) + |W_r|\left(n-\frac{k}{2}+1\right)\\
& \leq n \cdot \left(\frac{k}{2}-1\right) + \left(\frac{k}{2}-1\right)\left(n-\frac{k}{2}+1\right)\\
& = a(n,k).
\end{align*}

This implies that $|W_r |= \frac{k}{2}-1$, $\deg _R (w)=n$ for all $w\in W_r$ and $\deg_R (w)=\frac{k}{2}-1$ for all $w\in W_b$, which means that the red graph is isomorphic to $H$, a contradiction to the assumption we made at the beginning.

Therefore we can assume that $|W_r|=\frac{k}{2}-1+s$, for some $s\geq 1$. Because $\deg_B (v)=n-\frac{k}{2}+1$ for all $v\in V_b$ and all edges $vw$ with $v\in V_b$ and $w\in W_b$ are blue, then $e_B (v , W_r )=s$ for $v\in V_b$.

This implies that $e_B (W_r , V_b  ) =|V_b|\cdot s = (n-\frac{k}{2}+1)\cdot s$. Recall that $\deg_B (w)\leq \frac{k}{2}-1$ for all $w\in W_r$. We obtain the following inequality.

$$s\left(n-\frac{k}{2} +1\right) = e_B (W_r , V_b) \leq |W_r| \left(\frac{k}{2}-1\right)  = \left(\frac{k}{2}-1+s\right)\left(\frac{k}{2}-1\right)$$

Solving for $n$, we obtain

$$n \leq \left(\frac{k}{2}-1\right)^2 \frac{1}{s} + 2 \left(\frac{k}{2}-1\right) \leq \left(\frac{k}{2}-1\right)^2 + \left(k-2\right) =\frac{k^2}{4}-1. $$

This is a contradiction to the fact that $n\geq  \frac{k^2}{2}+k-2$.\\

\noindent\textbf{Case 2.} Suppose that $|V_r|, |V_b|,|W_r|,|W_b| \geq \frac{k}{2}$. Without loss of generality, we can assume that $|W_b|=\max \{|V_r |,|V_b |,|W_r |,|W_b |\}$. This implies that $|W_r|=\min \{|V_r |,|V_b |,|W_r |,|W_b |\}$. 

Let $|V_r |=|W_r|+t$ for some $t\geq 0$. We use the following two bounds on $|R|$.

$$|R| =e_R (V_r, W) + e_R (V_b ,W)\geq |V_r|\left(n-\frac{k}{2}+1\right) = \left(|W_r|+t\right)\left(n-\frac{k}{2}+1\right).$$
$$|R|= e_R (W_r, V) + e_r (W_b, V)\leq n \cdot |W_r|+ \left(\frac{k}{2}-1\right) |W_b|=n \cdot |W_r| + \left(\frac{k}{2}-1\right)\left(n-|W_r|\right).$$

Joining both bounds, we get 

$$n\left(t-\frac{k}{2}+1\right)\leq t \left(\frac{k}{2}-1\right).$$

Solving for $t$, we get the following.

$$t\leq \frac{n\left(\frac{k}{2}-1\right)}{n-\frac{k}{2}+1}=\frac{k}{2}-1+\frac{\left(\frac{k}{2}-1\right)^2}{n-\frac{k}{2}+1}<\frac{k}{2}$$

The last inequality holds if $n-\frac{k}{2}+1>(\frac{k}{2}-1)^2$, which is true due to the hypothesis that $n\geq \frac{k^2}{2}+k-2$.

This means that $t\leq \frac{k}{2}-1$. Note that $|V_b|=n-|V_r|=n-|W_r|-t$. We bound $e(V_b,W_r)$ by using the following.

$$e_B(W_r, V_b)\leq |W_r|\left( \frac{k}{2}-1\right);$$

$$e_R(V_b,W_r)\leq |V_b|\left(\frac{k}{2}-1\right).$$

The number of edges $e(V_b,W_r)$ is bounded as follows.

\begin{align*}
(n-|W_r|-t)|W_r|&=|V_b|\cdot |W_r|\\
&= e(V_b , W_r)\\
&\leq \left(|V_b|+ |W_r|\right)\left(\frac{k}{2}-1\right)\\
&= (n-t)(\frac{k}{2}-1).
\end{align*}

This is equivalent to:

$$n\left( |W_r|-\frac{k}{2}+1\right)\leq |W_r|^2 +t\cdot |W_r|-t\cdot (\frac{k}{2}-1).$$

Using the fact that $|W_r|-\frac{k}{2}+1>0$, we arrive at the next inequality for $n$. We leave a detailed discussion of the last inequality at the end of the proof.

\begin{equation}\label{eq-n}
n\leq \frac{|W_R|^2 +t( |W_r|-\frac{k}{2}+1)}{|W_r|-\frac{k}{2}+1}
=\frac{|W_r|^2}{|W_r|-\frac{k}{2}+1}+t\leq \frac{n}{2}+\frac{k^2}{4}-1.
\end{equation}

This means that $n\leq \frac{k^2}{2}-2$, which is a contradiction to $n\geq \frac{k^2}{2}+k-2$ for $k\geq2$.

We conclude the proof by justifying the last inequality in \Cref{eq-n}. Let $x=|W_r|$ and let $f$ be the following function depending on $x$, $f(x)=\frac{x^2}{x-\frac{k}{2}+1}$. The function $f(x)$ can be bounded as follows.

\begin{align*}
f(x)= \frac{x^2}{x-\frac{k}{2}+1}&=\frac{\left(x-\left(\frac{k}{2}-1\right)\right)^2+2x\left(\frac{k}{2}-1\right)-\left(\frac{k}{2}-1\right)^2}{x-\frac{k}{2}+1}\\
&= x-\frac{k}{2}+1 + \frac{\left(\frac{k}{2}-1\right)\left(x-\frac{k}{2}+1\right)+x\left(\frac{k}{2}-1\right)}{x-\frac{k}{2}+1}\\
&= x+\left(\frac{k}{2}-1\right)\frac{x}{x-\left(\frac{k}{2}-1\right)}\\
&\leq \frac{n}{2}+\left(\frac{k}{2}-1\right)\frac{\frac{k}{2}}{\frac{k}{2}-(\frac{k}{2}-1)}\\
&=\frac{n}{2}+\frac{k^2}{4}-\frac{k}{2}.
\end{align*}

The last inequality is due to the fact that $x\leq \frac{n}{2}$ and that $g(x)=\frac{x}{x-\left(\frac{k}{2}-1\right)}$ is a decreasing function defined on $\frac{k}{2}\leq x\leq\frac{n}{2}$. Therefore, the maximum value of $g(x)$ is reached when $x=\frac{k}{2}$. This bound on $f(x)$ and the fact that $t\leq \frac{k}{2}-1 $ provide the last inequality in \Cref{eq-n}.

$$n\leq \frac{|W_r|^2}{|W_r|-\frac{k}{2}+1}+t \leq \frac{n}{2}+\frac{k^2}{4}-\frac{k}{2}+ \frac{k}{2}-1=\frac{n}{2}+\frac{k^2}{4}-1.$$

\end{proof}

\section*{Acknowledgements}
This project was supported by PAPIIT IG100822.





\bibliographystyle{abbrv}
\bibliography{biblio}

\begin{thebibliography}{10}

\bibitem{ARS99}
N.~Alon, L.~R{\'o}nyai, and T.~Szab{\'o}.
\newblock Norm-graphs: variations and applications.
\newblock {\em Journal of Combinatorial Theory, Series B}, 76(2):280--290, 1999.

\bibitem{BeSch}
L.~Beineke and A.~Schwenk.
\newblock On a bipartite form of the ramsey problem.
\newblock {\em Proceedings of the Fifth British Combinatorial Conference (Univ. Aberdeen, Aberdeen, 1975)}, pages 17--22, 1976.

\bibitem{bol}
B.~Bollob{\'a}s.
\newblock {\em Extremal graph theory}.
\newblock Courier Corporation, 2004.

\bibitem{BHMM}
M.~Bowen, A.~Hansberg, A.~Montejano, and A.~M{\"u}yesser.
\newblock Colored unavoidable patterns and balanceable graphs.
\newblock {\em arXiv preprint arXiv:1912.06302}, 2019.

\bibitem{BLM}
M.~Bowen, A.~Lamaison, and A.~M{\"u}yesser.
\newblock Finding unavoidable colorful patterns in multicolored graphs.
\newblock {\em arXiv preprint arXiv:1807.02780}, 2018.

\bibitem{CHM_K4}
Y.~Caro, A.~Hansberg, and A.~Montejano.
\newblock Zero-sum ${K}_m$ over $\mathbb{{Z}}$ and the story of ${K}_4$.
\newblock {\em Graphs and Combinatorics}, 35:855--865, 2019.

\bibitem{CHM19}
Y.~Caro, A.~Hansberg, and A.~Montejano.
\newblock Unavoidable chromatic patterns in 2-colorings of the complete graph.
\newblock {\em Journal of Graph Theory}, 97(1):123--147, 2021.

\bibitem{CHM_evol}
Y.~Caro, A.~Hansberg, and A.~Montejano.
\newblock The evolution of unavoidable bi-chromatic patterns and extremal cases of balanceability.
\newblock {\em arXiv preprint arXiv:2204.04269}, 2022.

\bibitem{CLZ19}
Y.~Caro, J.~Lauri, and C.~Zarb.
\newblock On small balanceable, strongly-balanceable and omnitonal graphs.
\newblock {\em Discussiones Mathematicae Graph Theory}, 42(4):1219--1235, 2020.

\bibitem{char}
G.~Chartrand, L.~Lesniak, and P.~Zhang.
\newblock {\em Graphs \& digraphs}, volume~39.
\newblock CRC press, 2010.

\bibitem{conlon}
D.~Conlon, J.~Fox, and B.~Sudakov.
\newblock Recent developments in graph {R}amsey theory.
\newblock {\em Surveys in Combinatorics}, 424(2015):49--118, 2015.

\bibitem{CM08}
J.~Cutler and B.~Mont{\'a}gh.
\newblock Unavoidable subgraphs of colored graphs.
\newblock {\em Discrete Mathematics}, 308(19):4396--4413, 2008.

\bibitem{DEHV}
A.~Dailly, L.~Eslava, A.~Hansberg, and D.~Ventura.
\newblock The balancing number and generalized balancing number of some graph classes.
\newblock {\em The Electronic Journal of Combinatorics}, pages P1--48, 2023.

\bibitem{DHV20}
A.~Dailly, A.~Hansberg, and D.~Ventura.
\newblock On the balanceability of some graph classes.
\newblock {\em Discrete Applied Mathematics}, 291:51--63, 2021.

\bibitem{FoSu08}
J.~Fox and B.~Sudakov.
\newblock Unavoidable patterns.
\newblock {\em Journal of Combinatorial Theory, Series A}, 115(8):1561--1569, 2008.

\bibitem{foxden}
J.~Fox and B.~Sudakov.
\newblock Density theorems for bipartite graphs and related {R}amsey-type results.
\newblock {\em Combinatorica}, 29(2):153--196, 2009.

\bibitem{FoSu_DRCh}
J.~Fox and B.~Sudakov.
\newblock Dependent random choice.
\newblock {\em Random Structures \& Algorithms}, 38(1-2):68--99, 2011.

\bibitem{GiNa19}
A.~Gir\~{a}o and B.~Narayanan.
\newblock Turán theorems for unavoidable patterns.
\newblock {\em Mathematical Proceedings of the Cambridge Philosophical Society}, 172(2):423–442, 2022.

\bibitem{GiHa}
A.~Gir{\~a}o and R.~Hancock.
\newblock Two {R}amsey problems in blowups of graphs.
\newblock {\em arXiv preprint arXiv:2205.12826}, 2022.

\bibitem{gra}
R.~L. Graham, B.~L. Rothschild, and J.~H. Spencer.
\newblock {\em Ramsey theory}, volume~20.
\newblock John Wiley \& Sons, 1991.

\bibitem{KST54}
T.~Kóvari, V.~Sós, and P.~Turán.
\newblock On a problem of {K}. {Z}arankiewicz.
\newblock {\em Colloquium Mathematicae}, 3(1):50--57, 1954.

\bibitem{R30}
F.~P. Ramsey.
\newblock On a problem of formal logic.
\newblock {\em Classic Papers in Combinatorics}, pages 1--24, 1987.

\bibitem{ramtur}
M.~Simonovits and V.~T. S{\'o}s.
\newblock Ramsey--{T}ur{\'a}n theory.
\newblock {\em Discrete Mathematics}, 229(1-3):293--340, 2001.

\bibitem{soifer_book}
A.~Soifer.
\newblock {\em Ramsey theory: Yesterday, today, and tomorrow}, volume 285.
\newblock Springer Science \& Business Media, 2010.

\end{thebibliography}

\end{document}